\newcommand{\nn}{\mathbb{N}}
\newcommand{\co}{\text{cof}}
\newcommand{\eps}{\varepsilon}
\newcommand{\Ndb}{\mathbb N}
\newcommand{\Rdb}{\mathbb R}
\newcommand{\cal}{\mathcal}
\newcommand{\supp}{\text{supp}\,}
\newcommand{\spa}{\text{span}\,}
\theoremstyle{plain}
\newtheorem{theorem}{Theorem}[section]
\newtheorem{lemma}[theorem]{Lemma}
\newtheorem{corollary}[theorem]{Corollary}
\newtheorem{proposition}[theorem]{Proposition}
\newtheorem{thmAlfa}{Theorem}
\theoremstyle{definition}
\newtheorem*{definition*}{Definition}
\newtheorem{definition}[theorem]{Definition}
\theoremstyle{remark}
\newtheorem{remark}[theorem]{Remark}
\begin{document}
	
	%\allowdisplaybreaks
	
	\title[]{Universality, complexity and asymptotically uniformly smooth Banach spaces}
	
	\author{R.M.~Causey}
	\address{R.M~Causey}
	\email{rmcausey1701@gmail.com}

	\author{G.~Lancien}
	\address{G.~Lancien, Laboratoire de Math\'ematiques de Besan\c con, Universit\'e Bourgogne Franche-Comt\'e, 16 route de Gray, 25030 Besan\c con C\'edex, Besan\c con, France}
	\email{gilles.lancien@univ-fcomte.fr}
	
	\subjclass[2020]{}
	\keywords{}
	
	\begin{abstract}  For $1<p\le \infty$, we show the existence of a Banach space which is both injectively and surjectively universal for the class of all separable Banach spaces with an equivalent $p$-asymptotically uniformly smooth norm. We prove that this class is analytic complete in the class of separable Banach spaces. These results extend previous works by Kalton, Werner and Kurka in the case $p=\infty$. 
	\end{abstract}
	
	\maketitle
	
	\setcounter{tocdepth}{1}
	%\tableofcontents

	\section{Introduction}
	
	The notion of asymptotic smoothness has become very important in the recent developments of the geometry of Banach spaces. In this paper we study the existence of universal spaces and the complexity, in the sense of descriptive set theory, of a special class of asymptotically uniformly smooth Banach spaces. For a given $p \in (1,\infty]$, we are interested in the class of separable Banach spaces that admit an equivalent $p$-asymptotically uniformly smooth norm (we abbreviate $p$-AUS). In fact, it is one of the main results from \cite{CauseyPositivity2018} that the existence of an equivalent $p$-AUS norm is characterized by some $\ell_p$ upper estimates for weakly null trees or in terms of a two player infinite asymptotic game. We denote this property $\textsf{T}_p$ as a reference to the $\ell_p$ tree upper estimates. In Section \ref{definitions} we recall these definitions and the main characterizations of $\textsf{T}_p$.
	
	If we denote $\textsf{Sep}$ the class of all separable Banach spaces, it is known that $\textsf{T}_\infty \cap \textsf{Sep}$  coincides with the class of all Banach spaces that are isomorphic to a subspace of $c_0$. This result was first  explicitly proved in  \cite{GKL}, but could be readily extracted from ideas of N. Kalton and D. Werner in \cite{KW}. We also refer to \cite{JLPS} for the optimal proof. One of the goals of this paper is to find a space in $\textsf{T}_p \cap \textsf{Sep}$ which is both injectively and surjectively universal for the class $\textsf{T}_p \cap \textsf{Sep}$, with $1<p\le\infty$. This is achieved in Section \ref{universal}, where we prove.
	
	\begin{thmAlfa} For any $p\in (1,\infty]$, there exists a space $U_p$ in $\textsf{\emph{T}}_p\cap \textsf{\emph{Sep}}$ such that every Banach space in $\textsf{\emph{T}}_p \cap  \textsf{\emph{Sep}}$ is both isomorphic to a subspace and to a quotient of $U_p$. 
	\end{thmAlfa}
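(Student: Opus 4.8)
The plan is to reduce the statement to spaces carrying a shrinking finite-dimensional decomposition (FDD) with subsequential upper $\ell_p$-estimates, and then to construct $U_p$ as an amalgamation of all such decompositions. First I would appeal to the asymptotic versions of Zippin's embedding and quotient theorems recalled in Section~\ref{definitions} (in the spirit of \cite{CauseyPositivity2018} and of the work of Odell--Schlumprecht--Zs\'ak and Freeman--Odell--Schlumprecht--Zs\'ak): a separable Banach space $X$ lies in $\textsf{T}_p$ if and only if, after renorming, $X$ is isomorphic to a subspace of some Banach space $Z$ with a bimonotone shrinking FDD $(E_n)$ satisfying subsequential upper $\ell_p$-estimates, and if and only if $X$ is a quotient of such a space. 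Moreover any such $Z$ already belongs to $\textsf{T}_p$: subsequential upper $\ell_p$-estimates for the FDD, after a routine perturbation of weakly null trees to skipped block trees, yield the $\ell_p$-upper tree estimates that characterise $\textsf{T}_p$; and conversely quotients of $\textsf{T}_p$-spaces remain in $\textsf{T}_p$, since $p$-asymptotic uniform smoothness passes to quotients.

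The core of the argument is the construction of one space $U_p$ with a bimonotone shrinking FDD $(G_n)$ that is \emph{simultaneously} injectively and surjectively universal among all spaces with a shrinking, subsequential-upper-$\ell_p$ FDD. Using the universal-basis technique of Pe\l czy\'nski and its later refinements (Schechtman, Dodos), I would fix a countable, Banach--Mazur-dense family of finite-dimensional spaces, index the FDD of $U_p$ by the nodes of a countably branching tree so that along branches one reads off arbitrary prescribed sequences of these blocks, and choose the outer norm so that $(G_n)$ itself satisfies subsequential upper $\ell_p$-estimates. Then $U_p \in \textsf{T}_p$, and every $Z$ as above embeds isomorphically into $U_p$ as the closed span of a blocking of $(G_n)$. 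Dually, since $(G_n)$ is shrinking, $U_p^*$ carries the boundedly complete dual FDD $(G_n^*)$, which has subsequential lower $\ell_q$-estimates ($1/p+1/q=1$, with $q=1$ if $p=\infty$); the construction should be arranged so that this dual FDD is universal as well, i.e. every space with a boundedly complete subsequential-lower-$\ell_q$ FDD embeds weak$^*$-to-weak$^*$ into $U_p^*$. Applying this to $Z^*$, the weak$^*$-continuous embedding $Z^*\hookrightarrow U_p^*$ is the adjoint of a quotient map $U_p\twoheadrightarrow Z$.

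Granting all of this, Theorem~A follows by composition. Given $X\in\textsf{T}_p\cap\textsf{Sep}$, choose $Z$ with $X\hookrightarrow Z$ as in the first paragraph; then $X\hookrightarrow Z\hookrightarrow U_p$ exhibits $X$ as a subspace of $U_p$. Likewise choose a shrinking subsequential-upper-$\ell_p$ FDD space $W$ and a quotient map $W\twoheadrightarrow X$; composing with the quotient map $U_p\twoheadrightarrow W$ produced above, and using that a quotient of a quotient is a quotient (open mapping theorem), exhibits $X$ as a quotient of $U_p$.

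The main obstacle I anticipate is the amalgamation in the second paragraph: one must check that merging all admissible FDDs into the single space $U_p$ preserves the subsequential upper $\ell_p$-estimates of $(G_n)$ in a uniform way (a naive $\ell_p$- or $c_0$-sum of the pieces will not do), that $(G_n)$ stays shrinking, and --- for the surjective half --- that the dual-side embeddings can be realised by weak$^*$-to-weak$^*$ continuous maps, so that one actually obtains adjoints of quotient maps rather than mere embeddings of duals. Making a single space meet the injective and the surjective requirements at once, with all constants under control, is where the real work lies; the reductions of the first paragraph and the composition of the last are formal once the asymptotic Zippin-type results of Section~\ref{definitions} are in place.
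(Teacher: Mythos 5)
Your overall strategy --- reduce to spaces with a bimonotone shrinking FDD satisfying upper $\ell_p$ estimates via the asymptotic Zippin-type theorems of Freeman--Odell--Schlumprecht--Zs\'ak, then amalgamate all such decompositions into a single universal FDD space --- is the strategy of the paper (Theorem \ref{FDD} combined with Proposition \ref{Pressdownuniversal}). But the step you flag as ``where the real work lies'' is a genuine gap, and it is exactly the step the paper's machinery is built to fill. You do not say how to norm the amalgamated space so that the global FDD $(G_n)$ retains upper $\ell_p$ estimates while every admissible $Z$ still embeds with complemented range; as you note, a naive $\ell_p$- or $c_0$-sum does not work, and indexing the blocks by a tree does not by itself produce the norm. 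The paper's answer is the press-down norm: start from Schechtman's complementably universal space $W(\textsf{J})$ for bimonotone FDDs \cite{Schechtman1975} and pass to $W_{\wedge,p}(\textsf{J})$. Proposition \ref{resid1} gives upper $p$ block estimates with constant $1$ (hence membership in $\textsf{T}_p$ and shrinkingness of the FDD), and Proposition \ref{Pressdownuniversal} shows that Schechtman's embedding $A:Z\to W(\textsf{J})$ extends to an isomorphic embedding of $Z_{\wedge,p}(\textsf{H})$ into $W_{\wedge,p}(\textsf{J})$ whose range is still $1$-complemented. Since Theorem \ref{FDD} delivers every $X\in\textsf{T}_p\cap\textsf{Sep}$ as a subspace of some $Z_{\wedge,p}(\textsf{H})$ and as a quotient of some $Y_{\wedge,p}(\textsf{N})$, both halves of Theorem A follow with $U_p=W_{\wedge,p}(\textsf{J})$.

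A second, smaller point: your plan for the surjective half --- arranging a universal weak$^*$-to-weak$^*$ embedding of boundedly complete lower-$\ell_q$ duals into $U_p^*$ and taking adjoints --- is more than is needed and would force you to verify weak$^*$ continuity of the amalgamated embeddings, on which you have no handle. The complementation does this for free: if $Y_{\wedge,p}(\textsf{N})$ sits $1$-complemented in $W_{\wedge,p}(\textsf{J})$ via a projection $P$ and $q:Y_{\wedge,p}(\textsf{N})\to X$ is a quotient map, then $q\circ P$ is a quotient map from $W_{\wedge,p}(\textsf{J})$ onto $X$. The weak$^*$ isomorphisms do appear, but only inside the proof of Theorem 1.1 of \cite{FOSZ}, where they are converted into the subspace and quotient statements of Theorem \ref{FDD} via the press-down/lift-up duality of Proposition \ref{dualitydownup}.
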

	We explain in Section \ref{universal} how this result can be deduced from previous works by Odell and Schlumprecht \cite{OS} and Freeman, Odell, Schlumprecht and Zs\'ak \cite{FOSZ}. However, we have chosen to explain the construction through the use of ``press down'' norms associated with spaces with finite dimensional decompositions. All the necessary background on press down norms is introduced in Section \ref{pressdown}.
	
	Finally, in Section \ref{descriptive}, we address the question of the topological complexity of the class $\textsf{T}_p \cap \textsf{Sep}$. We first recall the framework built by B. Bossard in \cite{Bossard2002} in which this problem can be rigorously formulated. Again, the question was only open for $p\in (1,\infty)$, as O. Kurka proved in \cite{Kurka2018} that the class of all Banach spaces that linearly embed into $c_0$ is analytic complete. We extend this result by showing the following.
	
	\begin{thmAlfa} For any $p\in (1,\infty]$, the class  $\textsf{\emph{T}}_p\cap \textsf{\emph{Sep}}$ is analytic complete. In particular it is not Borel.
	\end{thmAlfa}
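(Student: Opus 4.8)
\emph{Strategy of proof.} The statement splits into two parts: that $\textsf{T}_p\cap\textsf{Sep}$ is analytic, and that it is $\mathbf{\Sigma}^1_1$-hard. Together these make it $\mathbf{\Sigma}^1_1$-complete, and a $\mathbf{\Sigma}^1_1$-complete set is never Borel (otherwise, via a Borel reduction, the set of ill-founded trees would be Borel, which it is not). For the first part I would work inside the standard Borel space $\mathrm{SB}$ of closed subspaces of $C(\Delta)$ equipped with the Effros--Borel structure, as in the framework of Bossard recalled in Section~\ref{descriptive}. By Theorem~A a separable Banach space $X$ belongs to $\textsf{T}_p$ if and only if $X$ is isomorphic to a subspace of the single separable space $U_p$; since $\{(X,Y)\in\mathrm{SB}\times\mathrm{SB}:X\text{ embeds isomorphically into }Y\}$ is analytic, fixing $Y=U_p$ shows that $\textsf{T}_p\cap\textsf{Sep}$ is analytic. (Alternatively this is visible from the game description of $\textsf{T}_p$ from Section~\ref{definitions}: ``there exist $C\in\mathbb N$ and a strategy winning the $C$-asymptotic $\ell_p$ game on $X$'' is a $\mathbf{\Sigma}^1_1$ statement, the payoff set being Borel in the run of the game.)

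\smallskip

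For $\mathbf{\Sigma}^1_1$-hardness the plan is to Borel-reduce the $\mathbf{\Sigma}^1_1$-complete set $\mathrm{IF}$ of ill-founded trees on $\mathbb N$ into $\textsf{T}_p\cap\textsf{Sep}$, i.e.\ to construct a Borel map $T\mapsto X_T\in\mathrm{SB}$ such that $X_T\in\textsf{T}_p$ precisely when $T$ is ill-founded. Following Kurka's treatment of the case $p=\infty$, together with the tree-parametrization of Tsirelson-type spaces going back to Gasparis and to Argyros--Dodos, I would attach to each tree $T$ on $\mathbb N$, in a Borel manner, a regular family $\mathcal F_T\subseteq[\mathbb N]^{<\omega}$, and let $X_T$ be the $\ell_p$-Tsirelson-type space (equivalently, a press-down space in the sense of Section~\ref{pressdown}) generated by $\mathcal F_T$ with a suitable parameter $\theta\in(0,1)$. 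The family $\mathcal F_T$ has to be engineered so that: (i) if $T$ has an infinite branch then $\mathcal F_T$ is equivalent to a bounded family (contained in $[\mathbb N]^{\le n}$ for some $n$), in which case $X_T$ is, up to isomorphism, $\ell_p$ (or $c_0$ when $p=\infty$), so $X_T\in\textsf{T}_p$; and (ii) if $T$ is well-founded then the Cantor--Bendixson index of $\mathcal F_T$ is at least $\omega^2$, which makes $\mathrm{Sz}(X_T)>\omega$ and hence, by the renorming characterization of $\textsf{T}_p$ recalled in Section~\ref{definitions} (see Theorem~\ref{Szlenk}), forces $X_T\notin\textsf{T}_p$. (For $p=\infty$ already index $\ge\omega$ suffices on the ``large'' side, since then $X_T$ is reflexive and does not embed into $c_0$; this is why the case $1<p<\infty$ needs a larger family.) Since well-founded trees occur of every countable rank but conclusion (ii) is needed uniformly, I would first compose with a Borel, well-foundedness-preserving self-map of the tree space that amplifies ranks (e.g.\ grafting each tree below a long stalk, or a tree product), so that $\mathcal F_T$ really does reach index $\omega^2$ whenever $T$ is well-founded. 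Borel measurability of $T\mapsto X_T$ is then routine, being read off the explicit formula for the norm of $X_T$ in terms of $\mathcal F_T$.

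\smallskip

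The main obstacle is precisely the combinatorial design and analysis of $\mathcal F_T$: exhibiting one Borel assignment that simultaneously produces the ``bounded'' behaviour (i) for all ill-founded $T$ and the ``index $\ge\omega^2$'' behaviour (ii) for all well-founded $T$, and then proving that a large family genuinely obstructs \emph{every} equivalent $p$-asymptotically uniformly smooth norm on $X_T$, not merely the canonical one. It is for this last point that the quantitative Szlenk-index (equivalently, asymptotic-game) characterization of $\textsf{T}_p$ is essential: it reduces the renorming-invariant statement $X_T\notin\textsf{T}_p$ to the purely combinatorial estimate on the Cantor--Bendixson index of $\mathcal F_T$. Once $\mathcal F_T$ and $X_T$ are in place with properties (i) and (ii), the map $T\mapsto X_T$ is the desired Borel reduction, so $\textsf{T}_p\cap\textsf{Sep}$ is $\mathbf{\Sigma}^1_1$-complete and therefore, as noted above, not Borel.
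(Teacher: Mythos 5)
Your first half is fine and matches the paper: analyticity of $\textsf{T}_p\cap\textsf{Sep}$ follows from the existence of the universal space $U_p$ together with the analyticity of the isomorphic embeddability relation on $\textsf{SB}$. The problem is the hardness half, which is a strategy outline whose central construction is missing, and whose stated specifications are partly unachievable and partly misdirected. First, your dichotomy (i) is in tension with the Borelness you need: for each $n$ the collection of compact families contained in $[\Ndb]^{\leqslant n}$ is closed in $\cal K(2^\Ndb)$, so ``bounded'' is a Borel (indeed $F_\sigma$) property of $\cal F_T$, and a Borel map $T\mapsto\cal F_T$ sending exactly the ill-founded trees to bounded families would make $\mathrm{IF}$ Borel. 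The dichotomy that actually works is ``$\cal M$ contains an infinite set'' versus ``$\cal M$ consists of finite sets,'' and the whole point of Hurewicz's theorem (Theorem \ref{Hurewicz}) is that this dichotomy is already $\Sigma^1_1$-complete on $\cal K(2^\Ndb)$; the paper therefore reduces from $\{\cal M:\cal M\cap\cal I\neq\emptyset\}$ rather than from $\mathrm{IF}$, and your proposed tree-to-family engineering would essentially have to reprove Hurewicz. Second, your parenthetical identification of the Tsirelson-type spaces with press-down spaces cannot be right: by Proposition \ref{resid1}, every space $Z_{\wedge,p}(\textsf{H})$ lies in $\textsf{T}_p$, so such spaces can never realize the ``bad'' side of the reduction.

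Third, and most substantively, the Szlenk-index mechanism you propose for the negative direction is not the one that works here, and the paper does not use it. In the paper's construction the space is $Z_{\cal M}=\big((T_{\cal M\cup\cal A_3})^*\big)^q{}_*$, and when $\cal M$ has no infinite member this space is \emph{reflexive}; reflexive spaces can perfectly well have Szlenk index $\omega$ (e.g.\ $\ell_2$), so forcing $\mathrm{Sz}>\omega$ is neither what happens nor what is needed. The actual obstruction (Lemma \ref{L3}) is finer: one first shows $\ell_q$ does not embed into the $q$-convexification $Y_{\cal M}^q$ when $X_{\cal M}$ is reflexive, and then shows that if the predual $(Y_{\cal M}^q)_*$ had $\textsf{T}_p$ one could build biorthogonal block sequences dominated respectively by the $\ell_q$ and $\ell_p$ bases, which by H\"older would produce a copy of $\ell_q$ inside $Y_{\cal M}^q$ --- a contradiction. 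This quantitative duality between $\textsf{T}_p$ for the predual and $\textsf{T}_q$-type upper estimates for the $q$-convexified space is the idea your sketch lacks; without it (or a genuine substitute), the claim that your family $\cal F_T$ ``obstructs every equivalent $p$-AUS norm'' is unsupported, and the reduction is not established.
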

	
	\section{Main characterizations of $p$-AUS-able spaces}\label{definitions}
	
	All Banach spaces are over the field $\mathbb{K}$, which is either $\mathbb{R}$ or $\mathbb{C}$.   We denote $B_X$ (resp. $S_X$) the closed unit ball (resp. sphere) of a Banach space $X$.    By \emph{subspace}, we shall always mean closed subspace.   Unless otherwise specified, all spaces are assumed to be infinite dimensional. 
	
	We start with the definition of the  \emph{modulus of asymptotic uniform smoothness} of $X$. If $X$ is infinite dimensional, for $\tau \geqslant 0$,  we define 
	\[\overline{\rho}_X(\tau) = \underset{y\in B_X}{\sup}\ \underset{E\in \co(X)}{\inf}\  \underset{x\in B_E}{\sup} \|y+\tau x\|-1,\] 
	where $\co(X)$ denotes the set of finite codimensional subspaces of $X$. For the sake of completeness, we define $\overline{\rho}_X(\tau)=0$ for all $\tau \geqslant 0$, when $X$ is finite dimensional.    We note that 
	\[\overline{\rho}_X(\tau) = \sup\{\underset{\lambda}{\lim\sup}\|y+\tau x_\lambda\|-1:(x_\lambda)\subset B_X\text{\ is a weakly null net}\}.\]  
	We say $X$ is \emph{asymptotically uniformly smooth} (in short \emph{AUS}) if 
	$$\lim_{\tau \to 0^+} \frac{\overline{\rho}_X(\tau)}{\tau}= 0.$$   
	We say $X$ is \emph{asymptotically uniformly smoothable} (\emph{AUS-able}) if $X$ admits an equivalent AUS norm. For $p\in (1,\infty)$, the norm $\|\ \|$ is said to be
	{\it $p$-asymptotically uniformly smooth} (in short $p$-AUS) if there exists $C>0$ such that
	$$\forall \tau\in [0,\infty),\ \overline{\rho}_X(\tau)\le C\tau^p.$$
	We say $X$ is $p$-\emph{asymptotically uniformly smoothable} ($p$-\emph{AUS-able}) if $X$ admits an equivalent $p$-AUS norm.  We say $X$ is \emph{asymptotically uniformly flat} (\emph{AUF}) if there exists $\tau_0>0$ such that $\overline{\rho}_X(\tau_0)=0$.  We say $X$ is \emph{asymptotically uniformly flattenable} (\emph{AUF-able}) if $X$ admits an equivalent AUF norm. Of course, $p$-AUS spaces and AUF spaces are AUS spaces. 
	
	The dual notion is provided by the following modulus defined on $X^*$ by
	$$ \overline{\delta}_X^*(\tau)=\inf_{x^*\in S_{X^*}}\sup_{E}\inf_{y^*\in S_E}\{\|x^*+\tau y^*\|-1\},$$
	where $E$ runs through $\co^*(X^*)$, the set of all weak$^*$-closed subspaces of
	$X^*$ of finite codimension. The norm of $X^*$ is said to be {\it weak$^*$  asymptotically uniformly convex} (in short AUC$^*$) if
	$$\forall \tau>0\ \ \ \ \overline{\delta}_X^*(\tau)>0.$$
	For $q\in [1,\infty)$, it is said to be
	{\it $q$-weak$^*$ asymptotically uniformly convex} (in short $q$-AUC$^*$) if there exists $c>0$ such that
	$$\forall \tau\in [0,1),\ \overline{\delta}_X^*(\tau)\ge c\tau^q.$$
	It is well known that the dual Young function of $\overline{\rho}_X$ is equivalent to  $\overline{\delta}^*_X$. Here is the precise version of this (Proposition 2.1 in \cite{DKLR2017}).
	
	\begin{proposition}\label{moduliduality} 
		There exists a universal constant $C\ge 1$ such that for any Banach space $X$ and any $0<\sigma,\tau<1$,
		\begin{enumerate}
			\item If $\overline{\rho}_X(\sigma)<\sigma \tau$, then $\overline{\delta}^*_X(C\tau)\ge \sigma\tau$.
			\item If $\overline{\delta}^*_X(\tau)>\sigma \tau$, then $\overline{\rho}_X(\frac{\sigma}{C}) \le \sigma\tau$.
			\item Let $p\in (1,\infty]$ and $q$ be its conjugate exponent. Then $\|\ \|_X$ is $p$-AUS if and only if $\|\ \|_{X^*}$ is $q$-AUC$^*$.
		\end{enumerate}
		
	\end{proposition}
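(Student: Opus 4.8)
The plan is to replace $\overline{\rho}_X$ by a ``distance‑function'' expression and $\overline{\delta}^*_X$ by a $w^*$-null net expression, prove (1) and (2) by (nearly) dual arguments, and deduce (3) by substituting power functions. For $y\in B_X$, a finite‑dimensional $F\subseteq X^*$ and $\tau>0$, a Hahn--Banach computation gives
\[
\sup_{x\in B_{F_\perp}}\|y+\tau x\|=\sup_{x^*\in B_{X^*}}\bigl(\mathrm{Re}\langle x^*,y\rangle+\tau\, d(x^*,F)\bigr),\qquad d(x^*,F):=\inf_{f\in F}\|x^*-f\|,
\]
the only non‑formal point being $(F_\perp)^\perp=F$ (as $F$ is finite‑dimensional), so that $\sup_{x\in B_{F_\perp}}\mathrm{Re}\langle x^*,x\rangle=\|x^*|_{F_\perp}\|=d(x^*,F)$; since every finite‑codimensional subspace of $X$ is such an $F_\perp$, this rewrites $\overline{\rho}_X(\tau)+1$ as $\sup_y\inf_F$ of the right‑hand side. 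Dually, the standard directed‑set and perturbation argument (a $w^*$-null net is norm‑asymptotically contained in each fixed finite‑codimensional $w^*$-closed subspace) yields
\[
\overline{\delta}^*_X(\tau)+1=\inf\bigl\{\,\liminf_\alpha\|x^*+\tau y^*_\alpha\|\ :\ x^*\in S_{X^*},\ (y^*_\alpha)\ w^*\text{-null in }S_{X^*}\,\bigr\},
\]
which we use together with the weakly null net form of $\overline{\rho}_X$ recalled in the text, and with the elementary fact that $\overline{\rho}_X$ is nondecreasing.

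\emph{Proof of (1).} Fix $x^*\in S_{X^*}$ and $\eps>0$, and pick $x\in S_X$ with $\mathrm{Re}\langle x^*,x\rangle>1-\eps$. Applying $\overline{\rho}_X(\sigma)<\sigma\tau$ at $x$ produces a finite‑dimensional $F\subseteq X^*$ with $M:=\sup_{z\in B_{F_\perp}}\|x+\sigma z\|<1+\sigma\tau$, so by the identity and homogeneity $\mathrm{Re}\langle z^*,x\rangle+\sigma\, d(z^*,F)\le M\|z^*\|$ for all $z^*\in X^*$. Choose now a finite‑dimensional $G\subseteq X$ that contains $x$ and $(1-2\eps)$-norms $W:=\mathrm{span}(\{x^*\}\cup F)$, and set $E':=G^\perp\in\co^*(X^*)$. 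For $y^*\in S_{E'}$ we have $\langle y^*,x\rangle=0$, and for every $f\in F$, with $s:=\|x^*-f\|$,
\[
\|x^*+C\tau y^*-f\|\ \ge\ \max\bigl(C\tau-s,\ (1-2\eps)s\bigr)\ \ge\ \frac{(1-2\eps)C\tau}{2-2\eps},
\]
so $d(x^*+C\tau y^*,F)\ge (1-2\eps)C\tau/(2-2\eps)$. Feeding $z^*=x^*+C\tau y^*$ into the linear bound, with $\mathrm{Re}\langle x^*+C\tau y^*,x\rangle=\mathrm{Re}\langle x^*,x\rangle>1-\eps$, gives
\[
\|x^*+C\tau y^*\|\ >\ \frac{1}{1+\sigma\tau}\Bigl(1-\eps+\frac{\sigma(1-2\eps)C\tau}{2-2\eps}\Bigr),
\]
and for $\eps$ small and $C$ a large enough universal constant (e.g.\ $C=8$, using $\sigma\tau<1$) the right‑hand side exceeds $1+\sigma\tau$; hence $\sup_{E'}\inf_{y^*\in S_{E'}}(\|x^*+C\tau y^*\|-1)\ge\sigma\tau$ for every $x^*$, i.e.\ $\overline{\delta}^*_X(C\tau)\ge\sigma\tau$. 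The delicate point is that $G$ must be chosen \emph{after} $F$: it is precisely the norming of $W$ by $G$ that keeps $x^*+C\tau y^*$ far from $F$, and the crude $\max$-estimate is where a universal constant $>1$ is unavoidably lost; making this step efficient is the main obstacle.

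\emph{Proof of (2).} It suffices to prove the formally stronger statement $\overline{\delta}^*_X(\tau)>\sigma\tau\Rightarrow\overline{\rho}_X(\sigma)\le\sigma\tau$; by monotonicity this gives $\overline{\rho}_X(\sigma/C)\le\sigma\tau$ for the same $C$ as in (1). Contrapose: from $\overline{\rho}_X(\sigma)>\sigma\tau$ the net form of $\overline{\rho}_X$ yields $y\in B_X$ and a weakly null net $(x_\lambda)\subseteq B_X$ with $\|y+\sigma x_\lambda\|\to L>1+\sigma\tau$ along a subnet. Pick $x^*_\lambda\in S_{X^*}$ norming $y+\sigma x_\lambda$ and pass to subnets so that $x^*_\lambda\to x^*_\infty$ in $w^*$, $\langle x^*_\lambda,y\rangle\to a$, $\mathrm{Re}\langle x^*_\lambda,x_\lambda\rangle\to b$, and $\|u^*_\lambda\|\to\ell$, where $u^*_\lambda:=x^*_\lambda-x^*_\infty$; put $m:=\|x^*_\infty\|$. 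Then $a+\sigma b=L$, $a=\langle x^*_\infty,y\rangle\le m$, and pairing $u^*_\lambda$ with $x_\lambda$ (using weak nullity of $(x_\lambda)$) gives $\ell\ge b$. The cases $m=0$ and $b\le0$ each force $L\le1$, so $m,b>0$ and $b=(L-a)/\sigma>\tau$; hence eventually $m\tau<\|u^*_\lambda\|$, and $v^*_\lambda:=u^*_\lambda/\|u^*_\lambda\|$ is a well‑defined $w^*$-null net in $S_{X^*}$. Applying the net form of $\overline{\delta}^*_X$ to $x^*_\infty/m\in S_{X^*}$ and $(v^*_\lambda)$, and combining with $\|x^*_\infty+\|u^*_\lambda\|v^*_\lambda\|=\|x^*_\lambda\|=1$ and the convexity bound $\|x^*_\infty+m\tau v^*_\lambda\|\le m+\tfrac{m\tau}{\|u^*_\lambda\|}(1-m)$, we get $m(1+\sigma\tau)<m+m\tau(1-m)/\ell$, i.e.\ $\sigma\ell<1-m$; but $\sigma\ell\ge\sigma b=L-a\ge L-m>1-m$, a contradiction. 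Hence $\overline{\rho}_X(\sigma)\le\sigma\tau$.

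\emph{Proof of (3).} Let $p\in(1,\infty)$ with conjugate $q$, so $(p-1)(q-1)=1$. If $X$ is $p$-AUS, say $\overline{\rho}_X(\sigma)\le K\sigma^p$, then for small $\tau$ apply (1) with $\sigma=(\tau/2K)^{q-1}$ (so $K\sigma^p=\sigma\tau/2<\sigma\tau$) to obtain $\overline{\delta}^*_X(C\tau)\ge(2K)^{1-q}\tau^q$, i.e.\ $X^*$ is $q$-AUC$^*$; conversely, from $\overline{\delta}^*_X(\tau)\ge k\tau^q$ apply (2) with $\tau=2(C\sigma'/k)^{p-1}$ to get $\overline{\rho}_X(\sigma')\lesssim(\sigma')^p$ for small $\sigma'$, hence for all $\sigma'$ by monotonicity and the trivial bound $\overline{\rho}_X(\sigma')\le\sigma'$. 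The endpoint $p=\infty$ (where $p$-AUS means AUF and $q=1$) is the same: (1) turns $\overline{\rho}_X(\sigma_0)=0$ into $\overline{\delta}^*_X(C\tau)\ge\sigma_0\tau$, and (2) with $\sigma=c/2$ turns $\overline{\delta}^*_X(\tau)\ge c\tau$ on $(0,1)$ into $\overline{\rho}_X(c/2C)\le(c/2)\tau\to0$.
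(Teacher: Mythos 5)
The paper does not prove this proposition; it is quoted verbatim from Proposition~2.1 of \cite{DKLR2017}, so there is no in-text argument to compare against. Your proof is, as far as I can check, correct and self-contained, and it follows the standard route for this duality (the one underlying \cite{DKLR2017} and going back to the Young-function duality of $\overline{\rho}_X$ and $\overline{\delta}^*_X$): the Hahn--Banach identity $\sup_{x\in B_{F_\perp}}\|y+\tau x\|=\sup_{x^*\in B_{X^*}}(\mathrm{Re}\langle x^*,y\rangle+\tau d(x^*,F))$ is the right linearization, and both halves check out. In (1) the key choice of $G$ after $F$ (so that $G$ norms $\mathrm{span}(\{x^*\}\cup F)$ and kills $y^*$ on $x$) is exactly what makes the lower bound on $d(x^*+C\tau y^*,F)$ work, and your bookkeeping with $C=8$ and $\eps\le\sigma\tau/9$ closes the estimate; in (2) the norming-functional/subnet argument is clean, and the contradiction $\sigma\ell<1-m$ versus $\sigma\ell\ge L-m>1-m$ is valid (note you actually prove (2) with $C=1$, which is stronger than stated). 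Two points you assert rather than prove are the net reformulations of $\overline{\rho}_X$ and $\overline{\delta}^*_X$; the first is stated in the paper, and the second (perturbing a $w^*$-null net in $S_{X^*}$ into a fixed finite-codimensional $w^*$-closed $E=G^\perp$) is standard but is doing real work in step (2), so it deserves the one-line justification you sketch. With that caveat, the argument is complete, and part (3) is a routine substitution of power functions into (1) and (2) exactly as in the cited source.
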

	
	We now define the fundamental property, which turns out to characterize $p$-AUS renormability,  through a two-players game on a Banach space $X$. Fix $1<p\leqslant \infty$ and let $q$ be its conjugate exponent. We denote by $W_X$ the set of weak neighborhoods of $0$ in $X$. For $c>0$, we define the $T(c,p)$ game on $X$.  In the $T(c,p)$ game, Player I chooses a weak neighborhood $U_1\in W_X$, and Player II chooses $x_1\in U_1\cap B_X$. Player I chooses $U_2\in W_X$, and Player II chooses $x_2\in U_2\cap B_X$.  Play continues in this way until $(x_i)_{i=1}^\infty$ has been chosen. Player I wins if $\|(x_i)_{i=1}^\infty\|_q^w \leqslant c$, and Player II wins otherwise, where
	$$\|(x_i)_{i=1}^\infty\|_q^w = \inf\big\{c\in (0,\infty],\ \forall a=(a_i)_{i=1}^\infty \in \ell_p\ \|\sum_{i=1}^\infty a_ix_i\|\le c\|a\|_p\big\}.$$
	It is known (see \cite{CauseyPositivity2018}, section 3) that this game is determined. That is, either Player I or Player II has a winning strategy.  We now let $\textsf{t}_p(X)$ denote the infimum of those $c>0$ such that Player I has a winning strategy in the $T(c,p)$ game, provided such a $c$ exists, and we let $\textsf{t}_p(X)=\infty$ otherwise. We can now define our class $\textsf{T}_p$.
	
	\begin{definition} Let $p\in (1,\infty]$. We denote $\textsf{T}_p$ the class of all Banach spaces $X$ such that $\textsf{t}_p(X)<\infty$. 
	\end{definition}

	Let $D$ be a set. We denote $\varnothing$ the empty sequence and $D^{\leqslant n}=\{\varnothing\}\cup \cup_{i=1}^n D^i$, $D^{<\omega}=\cup_{n=1}^\infty D^{\le n}$,  $D^\omega$ the set of all infinite sequences whose members lie in $D$ and $D^{\leqslant \omega}=D^{<\omega}\cup D^\omega$.  For $s,t\in D^{<\omega}$, we let $s\smallfrown t$ denote the concatenation of $s$ with $t$. We let $|t|$ denote the length of $t$.  For $0\leqslant i\leqslant|t|$, we let $t|_i$ denote the initial segment of $t$ having length $i$, where $t|_0=\varnothing$.  If $s\in D^{<\omega}$, we let $s\prec t$ denote the relation that $s$ is a proper initial segment of $t$.
	
	Given $D$ a weak neighborhood basis of $0$ in $X$ and $(x_t)_{t\in D^{<\omega}}\subset X$, we say $(x_t)_{t\in D^{<\omega}}$ is  \emph{weakly null}   provided that for each $t\in \{\varnothing\}\cup D^{<\omega}$,\\  $(x_{t\smallfrown (U)})_{U\in D}$ is a weakly null net. Here $D$ is directed by reverse inclusion. 
	
	We can now recall the main characterizations of this class, which are proved in full generality in \cite{CauseyPositivity2018} and can be extracted from the proof of Theorem 3 of \cite{OS} in the separable case. 
	
	\begin{theorem}\label{ttheorem} Fix $1<p\leqslant \infty$ and let $q$ be conjugate to $p$. Let $X$ be a Banach space. The following are equivalent  
		\begin{enumerate}[(i)]
			\item $X\in \textsf{\emph{T}}_p$. 
			\item There exists a constant $c>0$ such that for any weak neighborhood basis $D$ at $0$ in $X$ and any weakly null $(x_t)_{t\in D^{<\omega}}\subset B_X$, there exists $\tau\in D^\omega$ such that $\|(x_{\tau|_i})_{i=1}^\infty\|_q^w \leqslant c$. 
			%\item For any weak neighborhood basis $D$ at $0$ in $X$ and any weakly null $(x_t)_{t\in D^{<\omega}}\subset B_X$, there exists $\tau\in D^\omega$ such that $\|(x_{\tau|_i})_{i=1}^\infty\|_q^w <\infty$. 
			\item $X$ is $p$-AUS-able (resp. AUF-able if $p=\infty$). 
			\item $X$ admits an equivalent norm whose dual is $q$-AUC$^*$.
		\end{enumerate}
		Moreover, if $X\in \textsf{\emph{T}}_p$, then any separable subspace of $X$ has a separable dual.
	\end{theorem}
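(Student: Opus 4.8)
The plan is to run a short implication cycle, isolating one renorming step as the only substantial point. \emph{(iii)$\Leftrightarrow$(iv).} This is immediate from Proposition~\ref{moduliduality}(3) applied to an arbitrary equivalent norm: a norm on $X$ is $p$-AUS (resp.\ AUF when $p=\infty$) exactly when its dual norm is $q$-AUC$^*$ (resp.\ $1$-AUC$^*$, up to rescaling), so ``$X$ is $p$-AUS-able'' and ``$X$ admits a norm with $q$-AUC$^*$ dual'' say the same thing. \emph{(i)$\Leftrightarrow$(ii).} Both directions amount to unwinding a strategy along the given object. For (i)$\Rightarrow$(ii): fix $c>\textsf{t}_p(X)$ and a winning strategy $\Sigma$ for Player~I in the $T(c,p)$ game. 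Given a weak neighbourhood basis $D$ at $0$ and a weakly null $(x_t)_{t\in D^{<\omega}}\subset B_X$, play the game so that whenever $\Sigma$ outputs a weak neighbourhood $U$ after the partial play ending at a node $s$, one uses weak nullity of the net $(x_{s\smallfrown(V)})_{V\in D}$ to choose $V\in D$ with $x_{s\smallfrown(V)}\in U$ and declares $x_{s\smallfrown(V)}\in U\cap B_X$ to be Player~II's reply; the branch $\tau\in D^\omega$ thus generated is a legal play consistent with $\Sigma$, so $\|(x_{\tau|_i})_{i=1}^\infty\|_q^w\le c$, giving (ii) with constant $c$. For (ii)$\Rightarrow$(i): if $X\notin\textsf{T}_p$, the determinacy of the game (recalled in the text) gives Player~II a winning strategy in $T(c,p)$ for some, hence any, $c$; take $D=W_X$, which is a weak neighbourhood basis at $0$, and let $x_t\in B_X$ be Player~II's reply, according to that strategy, to the run in which Player~I plays the successive entries of $t$. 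This is a weakly null tree in $B_X$ every branch of which is a play won by Player~II, i.e.\ has $\|\cdot\|_q^w>c$, contradicting (ii).

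\emph{(iii)$\Rightarrow$(ii).} This is the classical ``quantitative AUS $\Rightarrow$ upper $\ell_p$ tree estimates'' argument. Fix an equivalent $p$-AUS norm $|\cdot|$, with $\overline{\rho}_{(X,|\cdot|)}(\tau)\le C\tau^p$ and $|\cdot|\le\|\cdot\|\le M|\cdot|$. Given a weakly null $(x_t)_{t\in D^{<\omega}}\subset B_X$, select a branch $\tau$ greedily: once $\tau|_n$ is fixed the vectors $x_{\tau|_1},\dots,x_{\tau|_n}$ are determined, so $\spa\{x_{\tau|_1},\dots,x_{\tau|_n}\}$ is a fixed finite-dimensional space with compact unit ball. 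Applying the estimate $\limsup_V|z+s\,x_{\tau|_n\smallfrown(V)}|\le|z|+C|s|^p|z|^{1-p}$, valid for the weakly null net $(x_{\tau|_n\smallfrown(V)})_{V\in D}$, together with a uniform continuity/compactness argument over that ball, one can pick $V\in D$ inside a sufficiently small weak neighbourhood of $0$ so that $|z+s\,x_{\tau|_{n+1}}|^p\le|z|^p+C|s|^p+2^{-n}$ for all $z$ in the ball and all scalars $s$ in a fixed bounded range. Iterating this inequality along the branch, with the routine bookkeeping on the partial sums $\sum_{i\le n}a_ix_{\tau|_i}$, yields a uniform bound $|\sum_i a_ix_{\tau|_i}|\le c'\|a\|_p$ for all finitely supported scalars, whence $\|(x_{\tau|_i})_{i=1}^\infty\|_q^w\le Mc'$; the case $p=\infty$ is the same, using $\overline{\rho}(\tau_0)=0$ in place of the power estimate to get a $c_0$-type upper bound.

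\emph{(ii)$\Rightarrow$(iii).} This is the heart of the matter, and I expect it to be the main obstacle: one must manufacture a genuine equivalent $p$-AUS (resp.\ AUF) norm out of the purely asymptotic information in (ii). I would invoke the renorming theorem of \cite{CauseyPositivity2018} (whose separable case is contained in the proof of Theorem~3 of \cite{OS}). If one wanted to reprove it, the scheme is to define an equivalent norm on $X$ as an infimum over all ``$\ell_p$-dominations'' of a vector along weakly null trees — a tree norm — or via a transfinite/fixed-point construction of norms; the delicate point, which is exactly where (ii) is used, is to check that this norm does not degenerate, i.e.\ that it is genuinely equivalent to the original, after which the estimate $\overline{\rho}(\tau)\lesssim\tau^p$ follows by construction. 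Dually, one may instead produce an equivalent norm whose dual satisfies $\ell_q$ lower estimates on weak$^*$-null trees, convert these into $\overline{\delta}^*(\tau)\gtrsim\tau^q$, and conclude by Proposition~\ref{moduliduality}(3). Everything else in the theorem is soft by comparison.

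\emph{The ``moreover'' clause.} The class $\textsf{T}_p$ is hereditary: for a subspace $Y\subseteq X$ the weak topology of $Y$ is the restriction of that of $X$ and the norm restricts, so every weakly null tree in $B_Y$ is one in $B_X$ and $\|\cdot\|_q^w$ is computed with the same norm; hence condition (ii) for $X$ restricts to (ii) for $Y$, i.e.\ $Y\in\textsf{T}_p$. A separable $Y\in\textsf{T}_p$ is therefore AUS-able by (iii); since AUS Banach spaces are Asplund (a classical fact) and separable Asplund spaces have separable dual, $Y^*$ is separable.
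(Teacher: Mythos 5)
The paper gives no proof of this theorem at all, simply citing \cite{CauseyPositivity2018} and the proof of Theorem 3 in \cite{OS}; your proposal fills in the soft implications and, like the paper, defers the one substantial step (the renorming (ii)$\Rightarrow$(iii)) to those same sources, so it is consistent with and more detailed than the paper's treatment. Your game-unwinding argument for (i)$\Leftrightarrow$(ii), the duality (iii)$\Leftrightarrow$(iv) via Proposition~\ref{moduliduality}(3), the greedy-branch argument for (iii)$\Rightarrow$(ii), and the heredity-plus-Asplund argument for the final clause are all standard and correct as sketched.
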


	\section{Press down and lift up norms}\label{pressdown}
	
	We introduce in this section two constructions of norms on a Banach space with a finite dimensional decomposition (we abbreviate FDD). We will recall the necessary background on FDD's, shrinking FDD's or boundedly complete FDD's. However, we refer the reader to \cite{LindenstraussTzafriri} for possibly missing information. One can trace back the use of lift up norms in the works of S. Prus \cite{Prus}. As we will see, it has also been widely exploited in the work of Odell and Schlumprecht \cite{OS} or Freeman, Odell, Schlumprecht and Zs\'{a}k \cite {FOSZ}. For the press down norm, we refer for instance to \cite{CauseyNavoyan} for a general presentation.

	We let $(u_i)_{i=1}^\infty$  denote the canonical basis of $c_{00}$ (the space of finitely supported scalar sequences). For subsets $E,F$ of $\nn$, we let $E<F$ denote the relation that $E=\varnothing$, $F=\varnothing$, or $\max E<\min F$.   For $f=\sum_{i=1}^\infty a_iu_i\in c_{00}$, we let $\supp(f)=\{i\in\nn: a_i\neq 0\}$.   For $f,g\in c_{00}$, we let $f<g$ denote the relation that $\supp(f)<\supp(g)$.  For $n\in\nn$, $E\subset \nn$, and $f\in c_{00}$, we let $n\leqslant E$ denote the relation that either $E=\varnothing$ or $n\leqslant \min E$, and we let $n\leqslant f$ denote the relation that $n\leqslant \supp(f)$.

	We recall that a \emph{finite dimensional decomposition} for a Banach space $Z$ is a sequence $\textsf{H}=(H_n)_{n=1}^\infty$ of finite dimensional, non-zero subspaces of $Z$ such that for any $z \in Z$, there exists a unique sequence $(z_n)_{n=1}^\infty \in \prod_{n=1}^\infty H_n$ such that $z=\sum_{n=1}^\infty z_n$. Then, we let $P^\textsf{H}_n$ denote the canonical projections $P^\textsf{H}_n(z) = z_n$, where $z=\sum_{n=1}^\infty z_n$ and $(z_n)_{n=1}^\infty \in \prod_{n=1}^\infty H_n$. For a finite or cofinite subset $I$ of $\nn$, we let $P^\textsf{H}_I=I^\textsf{H}=\sum_{n\in I}P^\textsf{H}_n$.  When no confusion can arise, we omit the superscript and simply denote $I^\textsf{H}$ by $I$. It follows from the principle of uniform boundedness that $\sup\{\|I^\textsf{H}\|,\ I\subset \Ndb\ \text{is an interval}\}$ is finite. We refer to this quantity as the \emph{projection constant} of $\textsf{H}$  in $Z$. If the projection constant of $\textsf{H}$  in $Z$ is $1$, we
	say $\textsf{H}$ is \emph{bimonotone}. It is well-known that if $\textsf{H}$ is an FDD for $Z$, then there exists an equivalent norm $|\ |$ on $Z$ such that $\textsf{H}$ is a bimonotone FDD of $(Z,|\ |)$. Finally, we denote $c_{00}(\textsf{H})$ the space of finite linear combinations of elements in $H_1,\ldots,H_n,\ldots$

	Assume now that $Z$ is a Banach space with FDD $(H_j)_{j=1}^\infty=\textsf{{H}}$ and $p\in [1,\infty]$. We first define the \emph{lift up} norm associated with $Z,\textsf{{H}}$ and $p$ and the corresponding Banach space $Z_{\vee,p}(\textsf{{H}})$ as the completion of $c_{00}(\textsf{H})$ under the norm
	$$\|z\|_{\vee,p} = \sup\Bigl\{\Big\|\big(\|J_iz\|_Z\big)_{i=1}^\infty\Big\|_{\ell_p}: J_i\neq \varnothing, J_1<J_2<\ldots, \nn=\cup_{i=1}^\infty J_i\Bigr\}.$$
	
	We now define the \emph{press down} norm associated with $Z,\textsf{{H}}$ and $p$ and the corresponding Banach space $Z_{\wedge,p}(\textsf{{H}})$. For $z\in c_{00}(\textsf{H})$, we set 
	$$[z]_{\wedge,p} = \inf\Bigl\{\Big\|\big(\|J_iz\|_Z\big)_{i=1}^\infty\Big\|_{\ell_p}: J_i\neq \varnothing, J_1<J_2<\ldots, \nn=\cup_{i=1}^\infty J_i\Bigr\}.$$
	Note that the conditions imposed on the $J_i$'s imply that they are intervals. Then we define
	$$\|z\|_{\wedge,p} = \inf\Big\{\sum_{n=1}^m [z_n]_{\wedge,p}:\ m\in \Ndb, z_n\in c_{00}(\textsf{H}), z=\sum_{n=1}^mz_n\Big\}.$$
	It is easily checked that $\|\ \|_{\wedge,p}$ is a norm and we denote $Z_{\wedge,p}(\textsf{{H}})$ the completion of $c_{00}(\textsf{H})$ under $\|\ \|_{\wedge,p}$. It is clear that $\textsf{H}$ is a FDD for $Z_{\wedge,p}(\textsf{{H}})$ and that it is bimonotone if $\textsf{H}$ is a bimonotone FDD of $Z$. 	We shall use the following easy technical simplification.
	
	\begin{proposition}\label{stubs} Let $Z$ be a Banach space with FDD $\textsf{\emph{H}}$ and $p\in (1,\infty]$. Then for any $l\le m$ in $\Ndb$ and any  $z\in \oplus_{j=l}^m H_j$, there exist intervals $I_1<\ldots <I_n$ such that $\cup_{i=1}^n I_i=[l,m]$ and 
		$$[z]_{\wedge,p} = \Big\|\big(\|I_iz\|_Z\big)_{i=1}^n\Big\|_{\ell_p}.$$   
	\end{proposition}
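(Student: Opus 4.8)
The plan is to exploit the fact that $z$ lives in the finite band $\oplus_{j=l}^m H_j$, so that in the definition of $[z]_{\wedge,p}$ only the trace of each admissible partition on the interval $[l,m]$ matters; since there are only finitely many ways to cut $[l,m]$ into successive intervals, the infimum defining $[z]_{\wedge,p}$ will turn out to be a minimum of exactly the required shape.

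First I would record the trivial observation that for every interval $I\subseteq\nn$ one has $P^{\textsf{H}}_Iz=P^{\textsf{H}}_{I\cap[l,m]}z$, because $P^{\textsf{H}}_jz=0$ for $j\notin[l,m]$; in particular $\|Iz\|_Z=\|(I\cap[l,m])z\|_Z$, which vanishes as soon as $I\cap[l,m]=\varnothing$. Now let $J_1<J_2<\cdots$ be admissible in the definition of $[z]_{\wedge,p}$. The set $\mathcal A=\{i:J_i\cap[l,m]\neq\varnothing\}$ is a nonempty finite block of consecutive indices, say $\mathcal A=\{i_0,\dots,i_1\}$ (nonempty since $l\le m$ and $\bigcup_i J_i=\nn$; finite and consecutive since the $J_i$ are successive and $[l,m]$ is an interval). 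Setting $n=i_1-i_0+1$ and $I_k=J_{i_0+k-1}\cap[l,m]$ for $1\le k\le n$, one checks that $I_1<\cdots<I_n$ are nonempty intervals with $\bigcup_{k=1}^n I_k=[l,m]$, and, by the observation above, $\|J_iz\|_Z=0$ for $i\notin\mathcal A$ while $\|J_{i_0+k-1}z\|_Z=\|I_kz\|_Z$, so that
\[\Big\|\big(\|J_iz\|_Z\big)_{i=1}^\infty\Big\|_{\ell_p}=\Big\|\big(\|I_kz\|_Z\big)_{k=1}^n\Big\|_{\ell_p}.\]
Conversely, given successive intervals $I_1<\cdots<I_n$ with $\bigcup_k I_k=[l,m]$, the sequence $J_1=[1,\max I_1]$, $J_k=I_k$ for $2\le k\le n$, and $J_{n+j}=\{m+j\}$ for $j\ge1$ is admissible and yields the same equality of $\ell_p$-norms. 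Hence
\[[z]_{\wedge,p}=\inf\Big\{\Big\|\big(\|I_kz\|_Z\big)_{k=1}^n\Big\|_{\ell_p}:\ n\ge1,\ I_1<\cdots<I_n\ \text{intervals},\ \textstyle\bigcup_{k=1}^n I_k=[l,m]\Big\}.\]

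To conclude, I would observe that the partitions of the finite set $[l,m]$ into successive intervals correspond bijectively to the subsets of $\{l,\dots,m-1\}$ (the set of ``cut points''), so there are only finitely many of them; the infimum just displayed is therefore over a finite set and is attained, which is exactly the claim. I do not expect a genuine obstacle here: the only point that needs a little care — and it is really the crux — is the bookkeeping showing that trimming each $J_i$ to $J_i\cap[l,m]$ alters neither $\|J_iz\|_Z$ nor, after discarding the indices that have disappeared, the $\ell_p$-norm of the resulting finite vector, together with the remark that this collapses an a priori infinite infimum into a finite one. (The hypothesis $p\in(1,\infty]$ is not actually used in this argument; it is present only for consistency with the rest of the paper.)
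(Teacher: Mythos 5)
Your proof is correct, and since the paper states Proposition~\ref{stubs} without proof (as an ``easy technical simplification''), your argument --- trimming an arbitrary admissible partition of $\Ndb$ to its trace on $[l,m]$, checking the two resulting infima coincide, and then noting that there are only finitely many interval partitions of $[l,m]$ so the infimum is attained --- is exactly the intended one. No gaps.
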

	
	The following lemma will be useful for our estimates. 
	\begin{lemma}\label{pressdown_conv} Assume moreover that $\textsf{\emph{H}}$ is a bimonotone FDD of $Z$. Let $I_1<I_2$ intervals of $\Ndb$ and $z_1,z_2 \in c_{00}(\textsf{\emph{H}})$ with $\supp(z_i)\subset I_i$, then  
		$$z_1+z_2 \in \overline{\text{conv}}\big\{y_1+y_2,\ \supp(y_i)\subset I_i,\ [y_i]_{\wedge,p}\le \|z_i\|_{\wedge,p}\big\}.$$
	\end{lemma}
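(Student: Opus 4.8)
The plan is to reduce the assertion to a one-block statement and then recombine the two blocks by taking a product of convex combinations.

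First I would isolate the one place where bimonotonicity is used: for $w\in c_{00}(\textsf{H})$ and any interval $I\subset\Ndb$ one has $[I^\textsf{H}w]_{\wedge,p}\le[w]_{\wedge,p}$. Indeed, for every admissible partition $\Ndb=\bigcup_iJ_i$ with $J_1<J_2<\cdots$, the set $J_i\cap I$ is again an interval, so bimonotonicity of $\textsf{H}$ gives $\|J_i^\textsf{H}I^\textsf{H}w\|_Z=\|(J_i\cap I)^\textsf{H}(J_i^\textsf{H}w)\|_Z\le\|J_i^\textsf{H}w\|_Z$; taking $\ell_p$-norms and then the infimum over all such partitions yields the estimate.

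Next I would prove the one-block version: if $z\in c_{00}(\textsf{H})$ satisfies $\supp(z)\subset I$ for an interval $I$, then $z\in\overline{\text{conv}}\,K$, where $K=\{y\in c_{00}(\textsf{H}):\supp(y)\subset I,\ [y]_{\wedge,p}\le\|z\|_{\wedge,p}\}$. We may assume $\|z\|_{\wedge,p}>0$, since otherwise $z=0\in K$. Given $\varepsilon>0$, choose a decomposition $z=\sum_{n=1}^mw_n$ in $c_{00}(\textsf{H})$ with $\sum_{n=1}^m[w_n]_{\wedge,p}<\|z\|_{\wedge,p}+\varepsilon$; replacing each $w_n$ by $I^\textsf{H}w_n$ (permissible since $I^\textsf{H}z=z$, and harmless by the truncation estimate) and discarding the zero terms, we may assume $\supp(w_n)\subset I$ and $[w_n]_{\wedge,p}>0$ for all $n$, while $\sigma:=\sum_n[w_n]_{\wedge,p}$ still satisfies $\|z\|_{\wedge,p}\le\sigma<\|z\|_{\wedge,p}+\varepsilon$. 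Putting $\lambda_n=[w_n]_{\wedge,p}/\sigma$ and $y_n=(\|z\|_{\wedge,p}/[w_n]_{\wedge,p})\,w_n$, one has $\sum_n\lambda_n=1$, $[y_n]_{\wedge,p}=\|z\|_{\wedge,p}$ (hence $y_n\in K$), and $\sum_n\lambda_ny_n=(\|z\|_{\wedge,p}/\sigma)\,z$; therefore $(\|z\|_{\wedge,p}/\sigma)z\in\text{conv}\,K$, and since $\|(\|z\|_{\wedge,p}/\sigma)z-z\|_{\wedge,p}\le\sigma-\|z\|_{\wedge,p}<\varepsilon$, letting $\varepsilon\to0$ gives $z\in\overline{\text{conv}}\,K$.

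Finally, applying the one-block statement to $z_1$ and to $z_2$ with the intervals $I_1$ and $I_2$ yields $z_i\in\overline{\text{conv}}\,K_i$, where $K_i=\{y\in c_{00}(\textsf{H}):\supp(y)\subset I_i,\ [y]_{\wedge,p}\le\|z_i\|_{\wedge,p}\}$. Since $\text{conv}\,K_1+\text{conv}\,K_2=\text{conv}(K_1+K_2)$ for arbitrary sets and $\overline A+\overline B\subset\overline{A+B}$, it follows that $z_1+z_2\in\overline{\text{conv}}\,K_1+\overline{\text{conv}}\,K_2\subset\overline{\text{conv}(K_1+K_2)}$, and $K_1+K_2=\{y_1+y_2:\supp(y_i)\subset I_i,\ [y_i]_{\wedge,p}\le\|z_i\|_{\wedge,p}\}$ is precisely the set appearing in the statement. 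The only genuinely delicate point is that $[\cdot]_{\wedge,p}$ is \emph{not} subadditive — which is exactly why $\|\cdot\|_{\wedge,p}$ is defined with the additional infimum over decompositions — so $K$ need not be convex, and passing through its convex hull together with the rescaling by $\|z\|_{\wedge,p}/\sigma$ is what makes the argument work; the rest is routine bookkeeping, the one subtlety being the truncation estimate where bimonotonicity enters.
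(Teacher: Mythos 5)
Your proof is correct and follows essentially the same route as the paper's: the truncation estimate $[Iz]_{\wedge,p}\le[z]_{\wedge,p}$ from bimonotonicity, the fact that the $\|\cdot\|_{\wedge,p}$-ball of the span of $\{H_j: j\in I\}$ is the closed convex hull of the corresponding $[\cdot]_{\wedge,p}$-ball, and the Minkowski-sum combination of the two blocks. You merely spell out the rescaling argument that the paper dismisses as ``standard arguments,'' and your write-up is a valid, more detailed version of the same proof.
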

	\begin{proof} It follows from standard arguments that the closed unit ball of $Z_{\wedge,p}(\textsf{{H}})$ is the closed convex hull of those $z\in c_{00}(\textsf{\emph{H}})$ such that $[z]_{\wedge,p}\le 1$. Since $\textsf{{H}}$ is bimonotone, we clearly have that for any interval $I\subset \Ndb$ and $z\in Z$, $[Iz]_{\wedge,p}\le [z]_{\wedge,p}$. Combining these two facts, we get that any $z$ in the closed linear span of $\{H_j, j\in I\}$ is in the closed convex hull of those $y$'s in the closed linear span of $\{H_j, j\in I\}$ satisfying $[y]_{\wedge,p}\le \|z\|_{\wedge,p}$. This finishes the proof. 
	\end{proof}
	
	We now detail a simple but crucial effect of the press down procedure. 
	
	\begin{proposition}\label{resid1} Let $Z$ be a Banach space with bimonotone FDD $\textsf{\emph{H}}$. Fix $1<p\leqslant \infty$. Then, the FDD $\textsf{\emph{H}}$ of the space $Z_{\wedge,p}(\textsf{\emph{H}})$  satisfies upper $p$ block estimates, with constant $1$. More precisely, for all $x,y \in c_{00}(\textsf{\emph{H}})$ so that $x<y$, we have 
$$\|x+y\|_{\wedge,p}^p \le \|x\|_{\wedge,p}^p+\|y\|_{\wedge,p}^p, \ \text{if}\ p\in (1,\infty)$$
and 
$$\|x+y\|_{\wedge,\infty} = \max\{\|x\|_{\wedge,\infty},\|y\|_{\wedge,\infty}\}, \ \text{if}\ p=\infty.$$
In particular $Z_{\wedge,p}(\textsf{\emph{H}})$ has $\textsf{\emph{T}}_p$. 
	\end{proposition}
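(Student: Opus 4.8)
The plan is to establish the block estimate first for the ``stub'' functional $[\,\cdot\,]_{\wedge,p}$, and then to transfer it to $\|\cdot\|_{\wedge,p}$ via the convexity built into the press down construction. For the stub level, fix $x<y$ in $c_{00}(\textsf{H})$ (the case $x=0$ or $y=0$ being trivial) and set $a=\max\supp x<\min\supp y=b$. Using Proposition~\ref{stubs} I would choose interval partitions $A_1<\dots<A_r$ of $[\min\supp x,a]$ and $B_1<\dots<B_s$ of $[b,\max\supp y]$ computing $[x]_{\wedge,p}$ and $[y]_{\wedge,p}$ exactly, then splice them into a single admissible partition of $\Ndb$ (absorbing the gap $(a,b)$ into $A_r$ and padding the tail beyond $\max\supp y$ by singletons, which contribute nothing). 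Because $\supp x$ and $\supp y$ are separated by the cut at $a$, every interval $J$ of the spliced partition satisfies $Jx=0$ or $Jy=0$, so $J(x+y)$ equals exactly one of the $A_ix$, or one of the $B_jy$, or $0$; summing the $p$-th powers then yields $[x+y]_{\wedge,p}^p\le[x]_{\wedge,p}^p+[y]_{\wedge,p}^p$, with the obvious substitution of $\max$ when $p=\infty$.

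To lift this to $\|\cdot\|_{\wedge,p}$, I would argue by homogeneity: assume $\|x\|_{\wedge,p}^p+\|y\|_{\wedge,p}^p\le 1$ (resp.\ $\max\{\|x\|_{\wedge,\infty},\|y\|_{\wedge,\infty}\}\le 1$) and show $\|x+y\|_{\wedge,p}\le 1$. Let $I_1<I_2$ be the intervals spanned by $\supp x$ and $\supp y$. By Lemma~\ref{pressdown_conv}, $x+y$ lies in the closed convex hull of vectors $u+v$ with $\supp u\subset I_1$, $\supp v\subset I_2$, $[u]_{\wedge,p}\le\|x\|_{\wedge,p}$ and $[v]_{\wedge,p}\le\|y\|_{\wedge,p}$. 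Each such summand has $u<v$, so the stub estimate gives $[u+v]_{\wedge,p}\le 1$ and hence $\|u+v\|_{\wedge,p}\le[u+v]_{\wedge,p}\le 1$ (the one-term decomposition always yields $\|\cdot\|_{\wedge,p}\le[\,\cdot\,]_{\wedge,p}$). Since the unit ball of $Z_{\wedge,p}(\textsf{H})$ is closed and convex, $\|x+y\|_{\wedge,p}\le 1$, which is the asserted upper $p$ block estimate. When $p=\infty$ the reverse inequality $\|x+y\|_{\wedge,\infty}\ge\max\{\|x\|_{\wedge,\infty},\|y\|_{\wedge,\infty}\}$ is immediate from bimonotonicity of $\textsf{H}$ in $Z_{\wedge,\infty}(\textsf{H})$ (apply $P^{\textsf{H}}_{I_1}$ and $P^{\textsf{H}}_{I_2}$), so equality holds.

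For the final assertion I would invoke Theorem~\ref{ttheorem}(ii). Iterating the block estimate gives $\|\sum_i b_iw_i\|\le\|(b_i)\|_{\ell_p}\sup_i\|w_i\|$ for every block sequence $(w_i)$ of $\textsf{H}$ in $W:=Z_{\wedge,p}(\textsf{H})$, that is, $\|(w_i)_i\|_q^w\le\sup_i\|w_i\|$. Given a weak neighborhood basis $D$ at $0$ and a weakly null $(x_t)_{t\in D^{<\omega}}\subset B_W$, I would run the standard perturbation argument along a branch: since $\textsf{H}$ is an FDD, the finite-rank projections $P^{\textsf{H}}_{[1,k]}$ carry weakly null nets to norm-null ones, so one can select $\tau\in D^\omega$ together with a block sequence $(w_i)$ of $\textsf{H}$ with $\|x_{\tau|_i}-w_i\|<2^{-i}$ and $\|w_i\|<2$ for all $i$; then $\|(x_{\tau|_i})_i\|_q^w\le 3$, which verifies condition (ii) and shows $W\in\textsf{T}_p$.

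The only point requiring real care is the passage from $[\,\cdot\,]_{\wedge,p}$ to $\|\cdot\|_{\wedge,p}$: the decompositions witnessing $\|x\|_{\wedge,p}$ and $\|y\|_{\wedge,p}$ need not respect the supports of $x$ and $y$, so one cannot naively add them block by block and preserve the separation that the stub estimate relies on. Lemma~\ref{pressdown_conv} is precisely what circumvents this, by realizing $x$ and $y$ as convex combinations of vectors confined to the supporting intervals $I_1$ and $I_2$; once that is available, the stub estimate and the passage to $\textsf{T}_p$ are routine.
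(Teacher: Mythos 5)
Your argument is correct and follows essentially the same route as the paper: the stub-level estimate $[x+y]_{\wedge,p}^p\le[x]_{\wedge,p}^p+[y]_{\wedge,p}^p$ via splicing the optimal partitions from Proposition~\ref{stubs}, then Lemma~\ref{pressdown_conv} to upgrade the right-hand side to the press down norms. The only (immaterial) differences are that you spell out the $p=\infty$ case and deduce $\textsf{T}_p$ by verifying condition (ii) of Theorem~\ref{ttheorem} with a perturbation argument, whereas the paper notes that constant-$1$ upper $p$ block estimates make the norm $p$-AUS and invokes condition (iii).
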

	
	\begin{proof} We only detail the case $p\in (1,\infty)$. Let $x,y \in c_{00}(\textsf{{H}})$ so that $x<y$ and $I<J$ intervals such that $\supp(x)\subset I$, $\supp(y) \subset J$ and $I\cup J$ is an interval containing $1$. By Proposition \ref{stubs}, there exist $m,n\in\nn$ and $I_1<\ldots<I_{m+n}$ such that $\cup_{i=1}^m I_i=I$, $\cup_{i=m+1}^{m+n} I_i=J$ and 
		$$[x]_{\wedge,p}^p = \sum_{i=1}^m \|I_ix\|_Z^p\ \ \text{and}\ \  [y]_{\wedge,p}^p = \sum_{i=m+1}^{m+n} \|I_iy\|_Z^p.$$ 
		We complete with $(I_j)_{j>m+n}$ an arbitrary interval partition of $\nn\setminus (I\cup J)$.  Then 
		\begin{align*}
			\|x+y\|_{\wedge,p}^p \leqslant [x+y]_{\wedge,p}^p \leqslant \sum_{i=1}^\infty \|I_i(x+y)\|_Z^p &= \sum_{i=1}^m \|I_ix\|_Z^p+\sum_{i=m+1}^\infty \|I_iy\|_Z^p\\ 
			&= [x]_{\wedge,p}^p+[y]_{\wedge,p}^p.
		\end{align*}
		The conclusion now follows from Lemma \ref{pressdown_conv}.\\
		These upper $p$ block estimates, with constant $1$ clearly imply that $\|\ \|_{\wedge,p}$ is $p$-AUS and therefore that $Z_{\wedge,p}(\textsf{H}) \in \textsf{{T}}_p$.	\end{proof}

	We now need to recall some basics on dual FDD's. If $Z$ is a Banach space with FDD $\textsf{H}=(H_n)_{n=1}^\infty$, we let $\textsf{H}^*$ denote the sequence $(H^*_n)_{n=1}^\infty$. Here, $H^*_n$ is identified with the sequence $((P^\textsf{H}_n)^*(Z^*))_{n=1}^\infty$. This identification does not need to be isometric if $\textsf{H}$ is not bimonotone in $Z$.  We let $Z^{(*)}=\overline{c_{00}(\textsf{H}^*)} \subset Z^*$. The FDD $\textsf{H}$ is said to be \emph{shrinking} if $Z^{(*)}=Z^*$, which occurs if and only if any bounded block sequence with respect to $\textsf{H}$ is weakly null. The FDD $\textsf{H}$ is said to be \emph{boundedly complete} if $\textsf{H}^*$ is a shrinking FDD of $Z^{(*)}$ (in that case $Z$ is canonically isomorphic to $(Z^{(*)})^*)$.
	
	The duality between press down and lift up norms is described by the following proposition, which is a special case of Proposition 2.1 in \cite{CauseyNavoyan}.
	
	\begin{proposition}\label{dualitydownup} Let $Z$ be a Banach space with bimonotone shrinking FDD $\textsf{\emph{H}}$. Let $p \in (1,\infty]$ and $q$ be its conjugate exponent. Then $Z_{\wedge,p}(\textsf{\emph{H}})^*$ is canonically isometric to $Z_{\vee,q}(\textsf{\emph{H}}^*)$. 
	\end{proposition}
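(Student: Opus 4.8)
The plan is to identify the dual of $Z_{\wedge,p}(\textsf{H})$ with $Z_{\vee,q}(\textsf{H}^*)$ by computing the action of a functional on $c_{00}(\textsf{H})$ directly, using the explicit description of the unit ball of $Z_{\wedge,p}(\textsf{H})$. First I would recall, as already used in the proof of Lemma \ref{pressdown_conv}, that the closed unit ball of $Z_{\wedge,p}(\textsf{H})$ is the closed convex hull of the set $\{z\in c_{00}(\textsf{H}): [z]_{\wedge,p}\le 1\}$; hence for $z^*\in c_{00}(\textsf{H}^*)$,
$$\|z^*\|_{Z_{\wedge,p}(\textsf{H})^*} = \sup\{|\langle z^*,z\rangle|: z\in c_{00}(\textsf{H}),\ [z]_{\wedge,p}\le 1\}.$$
Because $\textsf{H}$ is bimonotone and shrinking, $\textsf{H}^*$ is a bimonotone FDD of $Z^{(*)}=Z^*$, and for $z^*\in c_{00}(\textsf{H}^*)$ and a finite or cofinite interval $J$ one has $J^{\textsf{H}^*}z^* = (J^{\textsf{H}})^*z^*$, so that $\langle z^*, Jz\rangle = \langle Jz^*, z\rangle$ and $\|Jz^*\|_{Z^*} = \sup\{\langle z^*, Jz\rangle: \|z\|_Z\le 1\}$ with the supremum over $z$ supported in $J$.

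Next I would fix $z\in c_{00}(\textsf{H})$ with $[z]_{\wedge,p}\le 1$ and a corresponding admissible block partition $\nn=\cup_i J_i$ (intervals) witnessing $\|(\|J_iz\|_Z)_i\|_{\ell_p}\le 1$; only finitely many $J_i$ meet $\supp(z)$. Then
$$|\langle z^*,z\rangle| = \Bigl|\sum_i \langle J_iz^*, J_iz\rangle\Bigr| \le \sum_i \|J_iz^*\|_{Z^*}\,\|J_iz\|_Z \le \Bigl\|\bigl(\|J_iz^*\|_{Z^*}\bigr)_i\Bigr\|_{\ell_q}\Bigl\|\bigl(\|J_iz\|_Z\bigr)_i\Bigr\|_{\ell_p} \le \|z^*\|_{\vee,q},$$
by Hölder's inequality and the definition of the lift-up norm; taking the supremum over $z$ and over $m$ (the number of summands in the press-down norm, which only helps by the triangle inequality) gives $\|z^*\|_{Z_{\wedge,p}(\textsf{H})^*}\le \|z^*\|_{\vee,q}$. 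For the reverse inequality, given $\eps>0$ choose a block partition $\nn=\cup_i J_i$ nearly attaining $\|z^*\|_{\vee,q} = \sup\|(\|J_iz^*\|_{Z^*})_i\|_{\ell_q}$ (again only finitely many terms are nonzero), choose for each relevant $i$ a vector $y_i\in Z$ with $\supp(y_i)\subset J_i$, $\|y_i\|_Z\le 1$ and $\langle J_iz^*, y_i\rangle$ close to $\|J_iz^*\|_{Z^*}$, and then set $z = \sum_i \lambda_i y_i$ with scalars $\lambda_i\ge 0$ chosen so that $(\lambda_i)\in\ell_p$ has norm $1$ and $\langle z^*, z\rangle$ is close to $\|(\|J_iz^*\|_{Z^*})_i\|_{\ell_q}$ — this is exactly the equality case of Hölder. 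Since the $y_i$ are successive blocks, $[z]_{\wedge,p}\le \|(\|J_iz\|_Z)_i\|_{\ell_p}\le \|(\lambda_i)\|_{\ell_p}=1$, whence $\|z^*\|_{Z_{\wedge,p}(\textsf{H})^*}\ge \langle z^*,z\rangle \ge \|z^*\|_{\vee,q}-\eps$.

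This shows the canonical map $c_{00}(\textsf{H}^*)\to Z_{\wedge,p}(\textsf{H})^*$ is isometric for the $\|\ \|_{\vee,q}$ norm, hence extends to an isometric embedding $Z_{\vee,q}(\textsf{H}^*)\hookrightarrow Z_{\wedge,p}(\textsf{H})^*$. To see it is onto, I would invoke that $\textsf{H}$ is a shrinking FDD of $Z_{\wedge,p}(\textsf{H})$: this follows from Proposition \ref{resid1}, since upper $p$ block estimates with $p>1$ force every bounded block sequence to be weakly null, so $\textsf{H}$ is shrinking in $Z_{\wedge,p}(\textsf{H})$ and therefore $Z_{\wedge,p}(\textsf{H})^* = \overline{c_{00}(\textsf{H}^*)}$ in the dual norm, which by the isometry just established is exactly $Z_{\vee,q}(\textsf{H}^*)$. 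The only genuinely delicate point is the lower bound: one must handle the $\ell_q$/$\ell_\infty$ dichotomy (when $p=\infty$, $q=1$, and the "Hölder equality" degenerates to concentrating all mass on a single block) and be careful that the blocks $J_i$ witnessing $\|z^*\|_{\vee,q}$ can be taken finite in number and that the test vectors $y_i$ can genuinely be chosen with support in $J_i$ — here bimonotonicity of $\textsf{H}$ in $Z$, hence of $\textsf{H}^*$ in $Z^*$, is what makes the restriction to $J_i$ cost nothing. Everything else is a routine Hölder-duality computation, and the result is also a direct consequence of Proposition 2.1 of \cite{CauseyNavoyan}.
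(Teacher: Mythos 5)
Your argument is correct, but it is worth noting that the paper does not actually prove this proposition: it is quoted as a special case of Proposition 2.1 of \cite{CauseyNavoyan}, so your proposal supplies a self-contained proof where the paper only gives a citation. The route you take is the natural one and all the key points are in place: the identification of $B_{Z_{\wedge,p}(\textsf{H})}$ as the closed convex hull of $\{[z]_{\wedge,p}\le 1\}$ reduces the computation of the dual norm to testing against vectors with $[z]_{\wedge,p}\le 1$ (and by Proposition \ref{stubs} the witnessing interval partition is even attained for finitely supported $z$); the upper bound is H\"older against a common partition; the lower bound uses bimonotonicity of $\textsf{H}$, hence of $\textsf{H}^*$, to realize $\|J_iz^*\|_{Z^*}$ by test vectors supported in $J_i$, together with the H\"older equality case to choose the coefficients $\lambda_i$; and surjectivity follows because the upper $p$ block estimates of Proposition \ref{resid1} (with $p>1$) make $\textsf{H}$ shrinking in $Z_{\wedge,p}(\textsf{H})$ --- this is exactly the Corollary stated after the proposition in the paper, and it depends only on Proposition \ref{resid1}, so there is no circularity. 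What your approach buys is transparency: one sees exactly where shrinkingness (for surjectivity) and bimonotonicity (for the lower bound) enter, whereas the citation hides this.

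One small inaccuracy in a side remark: for $p=\infty$, $q=1$, the equality case of H\"older $\sum_i\lambda_i a_i\le\|\lambda\|_{\infty}\|a\|_1$ is attained by taking $\lambda_i=1$ for \emph{all} relevant $i$, i.e.\ by spreading mass over every block, not by ``concentrating all mass on a single block'' (that would be the degenerate case $q=\infty$, which does not occur here since $p>1$). This does not affect the proof, since the recipe you actually describe --- choose $(\lambda_i)$ with $\|(\lambda_i)\|_p=1$ realizing H\"older equality --- is correct in all cases $p\in(1,\infty]$.
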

	
	The following is an immediate corollary of Proposition \ref{resid1}. 
	
	\begin{corollary}   Let $Z$ be a Banach space with bimonotone FDD $\textsf{\emph{H}}$ and $p\in (1,\infty]$, then $\textsf{\emph{H}}$ is a shrinking FDD of $Z_{\wedge,p}(\textsf{\emph{H}})$. 
	\end{corollary}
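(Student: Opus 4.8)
The plan is to read off the statement directly from the block-estimate in Proposition \ref{resid1}, using the characterization recalled just above it: $\textsf{H}$ is a shrinking FDD of a space $W$ with FDD $\textsf{H}$ if and only if every bounded block sequence (with respect to $\textsf{H}$) in $W$ is weakly null. So, writing $W=Z_{\wedge,p}(\textsf{H})$, I would start from an arbitrary block sequence $(y_k)_{k=1}^\infty$ in $W$ with $C:=\sup_k\|y_k\|_{\wedge,p}<\infty$ and aim to show $y_k\to 0$ weakly. Note first that each $y_k$, being block-supported with respect to $\textsf{H}$, lies in a finite sum $\oplus_{j\in I_k}H_j$ with $I_1<I_2<\cdots$, so in particular $y_k\in c_{00}(\textsf{H})$ and Proposition \ref{resid1} applies to partial sums of the $y_k$'s.

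The key step is a Cesàro averaging estimate. By Proposition \ref{resid1} and an immediate induction (using that $y_{k_1}+\cdots+y_{k_{n-1}}<y_{k_n}$ whenever $k_1<\cdots<k_n$), one gets $\|\sum_{j=1}^n y_{k_j}\|_{\wedge,p}^p\le\sum_{j=1}^n\|y_{k_j}\|_{\wedge,p}^p\le C^pn$ when $p\in(1,\infty)$, and $\|\sum_{j=1}^n y_{k_j}\|_{\wedge,\infty}=\max_{1\le j\le n}\|y_{k_j}\|_{\wedge,\infty}\le C$ when $p=\infty$. In both cases $\|\sum_{j=1}^n y_{k_j}\|_{\wedge,p}\le Cn^{1/p}$ with the convention $1/\infty=0$, and hence, by homogeneity,
\[
\Bigl\|\tfrac1n\sum_{j=1}^n y_{k_j}\Bigr\|_{\wedge,p}\ \le\ C\,n^{1/p-1}\ \longrightarrow\ 0 \quad (n\to\infty),
\]
since $1/p-1<0$ for every $p\in(1,\infty]$. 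Thus the Cesàro averages of any subsequence of $(y_k)$ tend to $0$ in norm.

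From here the weak nullity follows by contradiction: if $(y_k)$ were not weakly null there would be $x^*\in W^*$, $\eps>0$, and a subsequence $(y_{k_j})_j$ with $|x^*(y_{k_j})|>\eps$; passing to a further subsequence, one may assume there is a unimodular scalar $\theta$ with $\operatorname{Re}(\overline{\theta}\,x^*(y_{k_j}))>\eps/2$ for all $j$ (in the real case: a subsequence on which $x^*(y_{k_j})$ has constant sign, with $\eps$ in place of $\eps/2$). Then $\tfrac{\eps}{2}\le |x^*(\tfrac1n\sum_{j=1}^n y_{k_j})|\le\|x^*\|\,C\,n^{1/p-1}\to 0$, a contradiction. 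Hence every bounded block sequence in $W$ is weakly null, i.e.\ $\textsf{H}$ is a shrinking FDD of $Z_{\wedge,p}(\textsf{H})$.

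There is no real obstacle here — this is genuinely a corollary — and the only points needing a word of care are the stabilization of the argument of $x^*(y_{k_j})$ in the complex scalar case and checking that the averaging rate $n^{1/p-1}$ decays uniformly, including at the endpoint $p=\infty$ where it is simply $n^{-1}$. (Alternatively one could invoke $W\in\textsf{T}_p$ from Proposition \ref{resid1}, but the direct block-sequence argument above is shorter and needs no separability hypothesis on $Z$.)
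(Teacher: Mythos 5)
Your proof is correct and is exactly the intended content of the word ``immediate'' here: the paper offers no written argument, and the natural deduction from Proposition \ref{resid1} is precisely your Ces\`aro-mean estimate $\|\frac1n\sum_{j=1}^n y_{k_j}\|_{\wedge,p}\le Cn^{1/p-1}\to 0$ for bounded block sequences, combined with the characterization of shrinking FDDs via weakly null block sequences recalled just before the corollary. The induction on the upper $p$ block estimate, the endpoint $p=\infty$, and the scalar stabilization are all handled correctly.
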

	
	\begin{remark} Note that for $p=\infty$, it follows from Proposition \ref{resid1} that if $Z$ is a Banach space with bimonotone FDD $\textsf{{H}}$, then  $Z_{\wedge,\infty}(\textsf{{H}})$ is simply isometric to the $(\sum_{i=1}^\infty H_i)_{c_0}$. In particular, for any $\eps>0$, it is $(1+\eps)$-isomorphic to a subspace of $c_0$.
	\end{remark}

	\section{A universal space for separable $p$-AUS-able spaces}\label{universal}
	
	\subsection{The main result}	
	The statements from this section can all be found either explicitly or implicitly in \cite{FOSZ}. We recall the main steps. The only difference with \cite{FOSZ} is the emphasis that we choose to put on press down instead of lift up norms.
	
	\begin{theorem}\label{FDD} Let $p\in (1,\infty]$ and $X$ be a separable Banach space with $\textsf{\emph{T}}_p$. Then 
		\begin{enumerate}[(i)]
			\item	There exists a Banach space $Z$ with a bimonotone shrinking FDD $\textsf{\emph{H}}$ such that $X$ is isomorphic to a subspace of $Z_{\wedge,p}(\textsf{\emph{H}})$. 
			\item There exists a Banach space $Y$ with a bimonotone shrinking FDD $\textsf{\emph{N}}$ such that $X$ is isomorphic to a quotient of $Y_{\wedge,p}(\textsf{\emph{N}})$.
		\end{enumerate}
	\end{theorem}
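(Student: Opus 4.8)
The plan is to deduce both parts from the Odell--Schlumprecht embedding and co-embedding theorems (in the form used in \cite{OS,FOSZ}), together with one elementary fact about press down norms that is what makes the press down formulation available; let $q$ be the conjugate exponent of $p$. I would first note that if $\widetilde Z$ is a Banach space with a bimonotone FDD $\textsf{H}$ satisfying upper $\ell_p$ block estimates with some constant $C$ --- that is, $\|\sum_k z_k\|_{\widetilde Z}\le C\big\|(\|z_k\|_{\widetilde Z})_k\big\|_{\ell_p}$ for every block sequence $(z_k)$ in $c_{00}(\textsf{H})$, with $\max_k$ in place of the $\ell_\infty$ norm when $p=\infty$ --- then the formal identity $\widetilde Z\to\widetilde Z_{\wedge,p}(\textsf{H})$ is a $C$-isomorphism onto. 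One inequality is immediate: taking an interval partition one of whose intervals contains $\supp(z)$ gives $[z]_{\wedge,p}\le\|z\|_{\widetilde Z}$, hence $\|z\|_{\wedge,p}\le\|z\|_{\widetilde Z}$. For the reverse, given $z=\sum_{n=1}^m z_n$ with each $z_n\in c_{00}(\textsf{H})$, Proposition~\ref{stubs} provides intervals $I^n_1<\cdots<I^n_{k_n}$ with $[z_n]_{\wedge,p}=\big\|(\|I^n_iz_n\|_{\widetilde Z})_i\big\|_{\ell_p}$, and applying the block estimate to the block decomposition $z_n=\sum_iI^n_iz_n$ gives $\|z_n\|_{\widetilde Z}\le C[z_n]_{\wedge,p}$; summing over $n$ and taking the infimum over all decompositions of $z$ yields $\|z\|_{\widetilde Z}\le C\|z\|_{\wedge,p}$. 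Granting this, (i) reduces to embedding $X$ isomorphically into a Banach space $\widetilde Z$ carrying a bimonotone shrinking FDD $\textsf{H}$ with upper $\ell_p$ block estimates --- then $X\hookrightarrow\widetilde Z\cong\widetilde Z_{\wedge,p}(\textsf{H})$ and $Z:=\widetilde Z$ works --- and (ii) reduces to realizing $X$ as a quotient of such a space $\widetilde Y$, since then $X$ is a quotient of $\widetilde Y\cong\widetilde Y_{\wedge,p}(\textsf{N})$ and $Y:=\widetilde Y$ works.

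To produce $\widetilde Z$ for (i) I would argue as in \cite{OS,FOSZ}. Since $X^*$ is separable by Theorem~\ref{ttheorem}, a theorem of Zippin embeds $X$ isometrically into a space $W$ with a shrinking FDD, which we renorm so that this FDD $\textsf{E}$ is bimonotone. Then, using the tree characterization of $\textsf{T}_p$ from Theorem~\ref{ttheorem}(ii) (equivalently, a winning strategy for Player~I in the game $T(c,p)$ on $X$) together with a Johnson--Zippin ``killing the overlap'' blocking, one passes to a blocking $\textsf{H}$ of $\textsf{E}$ along which every element of $B_X$ admits a skipped block decomposition up to a prescribed summable error and, crucially, along which normalized skipped block sequences drawn from $X$ satisfy a uniform upper $\ell_p$ estimate; $\textsf{T}_p$ enters here through the contrapositive, in that a failure of this for every blocking would produce a weakly null tree in $B_X$ all of whose branches violate the estimate, contradicting Theorem~\ref{ttheorem}(ii). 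The norm of $W$ on the coordinates of $\textsf{H}$, reinforced so as to enforce these upper $\ell_p$ estimates, then defines the required $\widetilde Z\supseteq X$ (its FDD being shrinking because upper $\ell_p$ block estimates force all bounded block sequences to be weakly null, and bimonotone after a final renorming). For (ii) I would run the dual argument: since $X$ is $p$-AUS-able, $X^*$ is separable and $q$-AUC$^*$-able by Proposition~\ref{moduliduality}, and the co-embedding form of the same blocking argument --- carried out, for instance, on $X^*=W^*/X^\perp$ after blocking the boundedly complete FDD $\textsf{E}^*$ of $W^*$ --- exhibits $X$ as a quotient of a space $\widetilde Y$ with a bimonotone shrinking FDD $\textsf{N}$ having upper $\ell_p$ block estimates. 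Equivalently, this may be phrased directly in press down terms: by Proposition~\ref{dualitydownup}, $X$ is a quotient of $Y_{\wedge,p}(\textsf{N})$ exactly when $X^*$ embeds weak$^*$-continuously into $Y_{\vee,q}(\textsf{N}^*)$, and the latter carries a boundedly complete FDD with lower $\ell_q$ block estimates, which is the natural kind of space into which to place the separable dual $X^*$.

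The substantive difficulty, in (i) and dually in (ii), is this blocking step: distilling the purely asymptotic content of $\textsf{T}_p$ --- a winning strategy in $T(c,p)$, or upper $\ell_p$ tree estimates --- into a \emph{single} blocking of the ambient FDD with respect to which a \emph{uniform} upper $\ell_p$ estimate holds simultaneously for all of $X$. Everything else is routine: the embedding of $X$ into a space with shrinking FDD is Zippin's theorem, the passage to press down norms is the elementary observation above built on Proposition~\ref{stubs}, and the duality needed for (ii) is furnished by Propositions~\ref{moduliduality} and~\ref{dualitydownup}.
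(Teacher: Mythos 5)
Your argument is correct, and its overall architecture is the same as the paper's: both defer the substantive work to the Odell--Schlumprecht/Freeman--Odell--Schlumprecht--Zs\'ak machinery, namely that a separable space with separable dual satisfying the $\ell_p$ upper tree estimates of Theorem~\ref{ttheorem}(ii) embeds into (resp.\ is a quotient of) a space with a shrinking FDD satisfying upper $\ell_p$ block estimates. Where you differ is in how you pass from that statement to the press down formulation. The paper goes through the dual: it extracts from the \emph{proof} of Theorem~1.1 of \cite{FOSZ} the weak$^*$ isomorphisms of $X^*$ onto a quotient of $Z^*_{\vee,q}(\textsf{H}^*)$ (resp.\ onto a weak$^*$ closed subspace of $Y_{\vee,q}(\textsf{N}^*)$) and then invokes the lift up/press down duality of Proposition~\ref{dualitydownup}. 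You instead prove the primal observation that if a bimonotone FDD satisfies upper $\ell_p$ block estimates with constant $C$, then the formal identity onto the press down space is a $C$-isomorphism (via Proposition~\ref{stubs}); your verification of both inequalities is correct, as is the remark that such block estimates force the FDD to be shrinking and survive the bimonotone renorming. This variant buys a small but real simplification: it lets you use the \emph{statement} of the FOSZ theorem rather than digging into its proof for the weak$^*$ isomorphism assertions, and it keeps the whole reduction on the primal side. The one place where you are (self-admittedly) only sketching is the blocking/killing-the-overlap step that converts tree estimates into block estimates after a suitable blocking; since the paper likewise treats this as a citation to \cite{FOSZ}, this is not a gap relative to the paper's own proof.
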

	
	\begin{proof} Assume that $X \in \textsf{T}_p\cap \textsf{Sep}$. Then, by Theorem \ref{ttheorem}, $X^*$ is separable and $X$ satisfies $\ell_p$ upper tree estimates (condition $(ii)$ in Theorem \ref{ttheorem}). This easily implies that $X$ satisfies subsequential $\ell_p$ upper tree estimates in the sense of \cite{FOSZ}, from which we just have to apply Theorem 1.1 in \cite{FOSZ} to conclude. To be more accurate, this information is in the proof of this Theorem 1.1, where the authors actually show the existence of weak$^*$ isomorphisms  from $X^*$ onto a quotient of $Z^*_{\vee,q}(\textsf{{H}}^*)$ by a weak$^*$ closed subspace (resp. onto a weak$^*$ closed subspace of $Y_{\vee,q}(\textsf{{N}}^*)$), where $q$ is the conjugate exponent of $p$. The conclusion then follows from Proposition \ref{dualitydownup}. 
	\end{proof}

	The next idea from \cite{FOSZ} is to use the complementably universal space for Banach spaces with an FDD built by Schechtman in \cite{Schechtman1975}. More precisely, Schechtman proved the following: for a given sequence $\textsf{J}=(J_n)_{n=1}^\infty$ of finite dimensional normed spaces which is dense in the space of all finite dimensional normed spaces for the Banach-Mazur distance,  there exists a Banach space $W(\textsf{J})$ with bimonotone FDD $\textsf{J}$ such that if $Z$ is any Banach space with bimonotone FDD $\textsf{H}$, then there exist integers $m_1<m_2<\ldots$ and a bounded, linear operator $A:Z\to W(\textsf{J})$ such that $A(H_n)=J_{m_n}$ for all $n\in \Ndb$  and 
	$$\forall z \in Z,\ \ \frac12\|z\|_Z \leqslant \|Az\|_{W(\textsf{J})} \leqslant 2\|z\|_Z$$ and such that $A(Z)=\overline{\text{span}}\{J_{m_n}:n\in\nn\}$ is $1$-complemented in $W(\textsf{J})$ via the map $P:w\mapsto \sum_{n=1}^\infty P^\textsf{J}_{m_n}w$. Then one can prove:

	\begin{proposition}\label{Pressdownuniversal} Let $p\in (1,\infty]$.\\ 
		Keeping the above notation, we have that $A:c_{00}(\textsf{\emph{H}})\to c_{00}(\textsf{\emph{J}})$ extends to an isomorphic embedding $\tilde{A}:Z_{\wedge,p}(\textsf{\emph{H}})\to W_{\wedge,p}(\textsf{\emph{J}})$, the range of which is still $1$-complemented in $W_{\wedge,p}(\textsf{\emph{J}})$ by $P$.
	\end{proposition}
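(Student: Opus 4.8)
The plan is to build the extension of $A$ together with a left inverse. Set $B:=A^{-1}\circ P:W(\textsf{J})\to Z$, which makes sense since $P$ has range $A(Z)$. From the hypotheses on Schechtman's map one reads off the coordinate relations $P^{\textsf{J}}_{m_n}A=AP^{\textsf{H}}_n$, $P^{\textsf{H}}_nB=BP^{\textsf{J}}_{m_n}$ and $BP^{\textsf{J}}_k=0$ for $k\notin\{m_n:n\in\Ndb\}$, together with $BA=\mathrm{Id}_Z$ and $AB=P$; moreover $\|A^{-1}\|\le2$, so $\|B\|\le\|A^{-1}\|\,\|P\|\le2$, and $\|Az\|_{W(\textsf{J})}\le2\|z\|_Z$ for all $z$. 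In particular $A$ maps $c_{00}(\textsf{H})$ into $c_{00}(\textsf{J})$ and $B$ maps $c_{00}(\textsf{J})$ into $c_{00}(\textsf{H})$.

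The heart of the matter is a transfer principle: a coordinate-respecting bounded operator extends to the press-down completions without loss of norm. First I would show $[Az]_{\wedge,p}\le2\,[z]_{\wedge,p}$ for every $z\in c_{00}(\textsf{H})$. Given an interval partition $\Ndb=\bigcup_iI_i$ computing $[z]_{\wedge,p}$ up to $\eps$ (by Proposition~\ref{stubs} one may even take it finite), one ``spreads it out'' to an interval partition $\Ndb=\bigcup_iJ_i$ in which $J_i$ is arranged to contain exactly those $m_k$ with $k\in I_i$; this is possible because $(m_n)$ is strictly increasing, and one checks $J_1<J_2<\cdots$ with $\bigcup_iJ_i=\Ndb$. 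The coordinate relation gives $P^{\textsf{J}}_{J_i}(Az)=A(P^{\textsf{H}}_{I_i}z)$, hence $\|P^{\textsf{J}}_{J_i}(Az)\|_{W(\textsf{J})}\le2\|P^{\textsf{H}}_{I_i}z\|_Z$; summing in $\ell_p$ and taking the infimum over $(I_i)$ gives the claim. Symmetrically, for $w\in c_{00}(\textsf{J})$ and an interval partition $\Ndb=\bigcup_iK_i$ nearly computing $[w]_{\wedge,p}$, the sets $I_i:=\{n:m_n\in K_i\}$ again form an interval partition of $\Ndb$, $P^{\textsf{H}}_{I_i}(Bw)=B(P^{\textsf{J}}_{K_i}w)$, and (using that $\textsf{J}$ is bimonotone in $W(\textsf{J})$, so $\|P^{\textsf{J}}_{K_i}\|\le1$) $\|P^{\textsf{H}}_{I_i}(Bw)\|_Z\le2\|P^{\textsf{J}}_{K_i}w\|_{W(\textsf{J})}$, so $[Bw]_{\wedge,p}\le2\,[w]_{\wedge,p}$. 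In both cases the corresponding bound for the full norm is immediate from its definition as an infimum over finite $c_{00}$-decompositions: applying $A$ (resp.\ $B$) term by term to such a decomposition yields $\|Az\|_{\wedge,p}\le2\|z\|_{\wedge,p}$ and $\|Bw\|_{\wedge,p}\le2\|w\|_{\wedge,p}$.

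Thus $A$ and $B$ extend to bounded operators $\tilde A:Z_{\wedge,p}(\textsf{H})\to W_{\wedge,p}(\textsf{J})$ and $\tilde B:W_{\wedge,p}(\textsf{J})\to Z_{\wedge,p}(\textsf{H})$ of norm at most $2$, and by continuity from the dense subspaces they satisfy $\tilde B\tilde A=\mathrm{Id}$ and $\tilde A\tilde B=\tilde P$, where $\tilde P$ denotes the extension of $P$. The first identity gives $\|z\|_{\wedge,p}=\|\tilde B\tilde Az\|_{\wedge,p}\le2\|\tilde Az\|_{\wedge,p}$, so $\tilde A$ is an isomorphic embedding with $\tfrac12\|z\|_{\wedge,p}\le\|\tilde Az\|_{\wedge,p}\le2\|z\|_{\wedge,p}$, and its (closed) range is exactly the closed span of $\bigcup_nJ_{m_n}$ in $W_{\wedge,p}(\textsf{J})$, namely $\tilde A(Z_{\wedge,p}(\textsf{H}))$.

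It remains to check that $\tilde P$ is a norm-one projection onto the range of $\tilde A$. That $\tilde P$ is a projection with the correct range is clear from $\tilde P=\tilde A\tilde B$ (or from $P^2=P$ and density). For $\|\tilde P\|\le1$ I would use that $P=\sum_nP^{\textsf{J}}_{m_n}$ is diagonal with respect to $\textsf{J}$, hence commutes with every interval projection $P^{\textsf{J}}_I$, and that $\|P\|_{W(\textsf{J})}=1$: for $v\in c_{00}(\textsf{J})$ and any interval partition $\Ndb=\bigcup_iJ_i$ we get $P^{\textsf{J}}_{J_i}(Pv)=P(P^{\textsf{J}}_{J_i}v)$, of $W(\textsf{J})$-norm at most $\|P^{\textsf{J}}_{J_i}v\|_{W(\textsf{J})}$, whence $[Pv]_{\wedge,p}\le[v]_{\wedge,p}$ and then $\|Pv\|_{\wedge,p}\le\|v\|_{\wedge,p}$ by the decomposition argument; since a nonzero projection has norm at least $1$, in fact $\|\tilde P\|=1$. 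The only genuine subtlety in all this is the bookkeeping in the transfer principle — verifying that the spread and contracted families remain interval partitions of all of $\Ndb$ and that the intertwining identities hold on the nose — together with the observation that it is $\|P\|_{W(\textsf{J})}=1$, rather than mere boundedness of $P$, that is preserved by the press-down procedure; the rest is routine manipulation of the two infima defining $[\cdot]_{\wedge,p}$ and $\|\cdot\|_{\wedge,p}$.
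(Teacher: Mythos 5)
Your proof is correct. The paper in fact omits any proof of this proposition (it is introduced only with ``Then one can prove:''), and your transfer argument --- intertwining $A$ and $B=A^{-1}\circ P$ with the coordinate projections, spreading/contracting interval partitions along the strictly increasing sequence $(m_n)$ to compare $[\,\cdot\,]_{\wedge,p}$ on both sides, and then passing to the infimum over finite decompositions for $\|\cdot\|_{\wedge,p}$ --- is exactly the natural argument the authors intend. The only loose end is that when you contract a partition $(K_i)$ to $I_i=\{n: m_n\in K_i\}$ some $I_i$ may be empty; since each $K_i$ is finite, infinitely many $I_i$ are nonempty, these are still successive intervals covering $\mathbb N$, and the discarded empty blocks contribute nothing to the $\ell_p$ sum, so the estimate survives.
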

	
	Combining Theorem \ref{FDD} and Proposition \ref{Pressdownuniversal}, we deduce the following ultimate result. We insist on the fact that it can be extracted from \cite{FOSZ}, although it is only stated there for embeddings (Corollary 3.3) and the construction is made in terms of lift up norms in the dual FDD's.
	
	\begin{theorem}\label{universal_t} Let $p\in (1,\infty]$ and $\textsf{\emph{J}}$ be a Banach-Mazur dense sequence of finite dimensional normed spaces. Then every space in $\textsf{\emph{T}}_p\cap \textsf{\emph{Sep}}$ is both isomorphic to a subspace and to a quotient of $W_{\wedge,p}(\textsf{\emph{J}})$. In other words, $W_{\wedge,p}(\textsf{\emph{J}})$  is both universal and surjectively universal for $\textsf{\emph{T}}_p\cap \textsf{\emph{Sep}}$. 
	\end{theorem}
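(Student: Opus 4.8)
The plan is to obtain both statements by simply composing the two main tools of this section, Theorem~\ref{FDD} and Proposition~\ref{Pressdownuniversal}, together with the elementary remark that the property of being a quotient is inherited through complemented subspaces.

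Fix $X\in\textsf{T}_p\cap\textsf{Sep}$. For the embedding assertion, I would first apply Theorem~\ref{FDD}(i) to produce a Banach space $Z$ with bimonotone shrinking FDD $\textsf{H}$ such that $X$ is isomorphic to a subspace of $Z_{\wedge,p}(\textsf{H})$. Letting $A\colon Z\to W(\textsf{J})$ be the Schechtman operator associated with $Z$ and $\textsf{H}$ recalled above, Proposition~\ref{Pressdownuniversal} extends $A$ to an isomorphic embedding $\tilde{A}\colon Z_{\wedge,p}(\textsf{H})\to W_{\wedge,p}(\textsf{J})$. Composing this embedding with the isomorphic embedding of $X$ into $Z_{\wedge,p}(\textsf{H})$ exhibits $X$ as (isomorphic to) a subspace of $W_{\wedge,p}(\textsf{J})$.

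For the quotient assertion, I would apply Theorem~\ref{FDD}(ii) to get a Banach space $Y$ with bimonotone shrinking FDD $\textsf{N}$ and a quotient map $Q\colon Y_{\wedge,p}(\textsf{N})\to X$. Applying Proposition~\ref{Pressdownuniversal} to $Y$ and $\textsf{N}$ yields an isomorphic embedding $\tilde{A}\colon Y_{\wedge,p}(\textsf{N})\to W_{\wedge,p}(\textsf{J})$ whose range $V:=\tilde{A}\big(Y_{\wedge,p}(\textsf{N})\big)$ is $1$-complemented in $W_{\wedge,p}(\textsf{J})$ via a projection $P$ onto $V$. The composition
$$T:=Q\circ\tilde{A}^{-1}\circ P\colon W_{\wedge,p}(\textsf{J})\longrightarrow X,$$
where $\tilde{A}^{-1}\colon V\to Y_{\wedge,p}(\textsf{N})$ denotes the inverse of the isomorphism $\tilde{A}$ onto its range, is a bounded linear surjection, being a composition of the surjection $P$ onto $V$, the isomorphism $\tilde{A}^{-1}$, and the surjection $Q$. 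By the open mapping theorem $T$ is a quotient map, so $X$ is isomorphic to $W_{\wedge,p}(\textsf{J})/\ker T$, i.e.\ $X$ is a quotient of $W_{\wedge,p}(\textsf{J})$. Finally I would record that $W(\textsf{J})$ carries a bimonotone FDD, so by Proposition~\ref{resid1} the space $W_{\wedge,p}(\textsf{J})$ lies in $\textsf{T}_p$, and it is plainly separable; this makes precise the claim that it is universal and surjectively universal for $\textsf{T}_p\cap\textsf{Sep}$.

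I do not expect a genuine obstacle here: all the analytic content has already been absorbed into Theorem~\ref{FDD} (the coordinatization of $X^*$ via \cite{FOSZ} and the duality of press-down and lift-up norms) and into Proposition~\ref{Pressdownuniversal} (the compatibility of the Schechtman embedding with the press-down construction). The only point requiring a little care is the quotient step: it is essential that $V$ be \emph{complemented} in $W_{\wedge,p}(\textsf{J})$, not merely a subspace, so that pre-composing the transported quotient map $Q\circ\tilde{A}^{-1}$ with the projection $P$ produces a surjection defined on all of $W_{\wedge,p}(\textsf{J})$; complementability is exactly what Proposition~\ref{Pressdownuniversal} guarantees.
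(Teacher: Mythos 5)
Your proposal is correct and follows exactly the route the paper intends: the paper derives Theorem~\ref{universal_t} by the single sentence ``Combining Theorem~\ref{FDD} and Proposition~\ref{Pressdownuniversal}, we deduce the following ultimate result,'' and your argument is precisely that combination, with the quotient step handled via the $1$-complementedness of the image guaranteed by Proposition~\ref{Pressdownuniversal}. Nothing is missing.
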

	
	\begin{remark} We conclude this section with the following elementary and well known remark: one cannot expect an analogous statement for $p$-uniformly smoothable spaces. Indeed, for all $q\in [2,\infty)$, the space $\ell_q$ is $2$-uniformly smooth. Now let $X$ be a Banach space containing an isomorphic copy of all $2$-uniformly smooth  separable Banach spaces. Then $X$ isomorphically contains $\ell_q$ for $2\le q<\infty$. It follows from standard arguments that $X$ has trivial cotype, therefore is not super-reflexive and thus does not admit any equivalent uniformly smooth norm. 
	\end{remark}
	
	\subsection{A few remarks on this universal space}. 
	
	For a Banach space $X$, we denote by $\cal S(X)$ the class of all Banach spaces isomorphic to a subspace of $X$ and by $\cal Q(X)$ the class of all Banach spaces isomorphic to a quotient of $X$. In the following proposition, the case $p=\infty$ is an old result by W. B. Johnson and M. Zippin \cite{JohnsonZippin}.
	
	\begin{proposition} Let $p\in (1,\infty]$ and $\textsf{\emph{J}}$ be a Banach-Mazur dense sequence of finite dimensional normed spaces. Then 
		$$\textsf{\emph{T}}_p\cap \textsf{\emph{Sep}} = \cal S(W_{\wedge,p}(\textsf{\emph{J}}))=\cal Q(W_{\wedge,p}(\textsf{\emph{J}})).$$
		%In particular, $W_{\wedge,\infty}(\textsf{\emph{J}})$ is not isomorphic to $c_0$. 
	\end{proposition}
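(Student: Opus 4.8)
Write $W:=W_{\wedge,p}(\textsf{J})$. By Theorem~\ref{universal_t} we already have $\textsf{T}_p\cap\textsf{Sep}\subseteq\cal S(W)$ and $\textsf{T}_p\cap\textsf{Sep}\subseteq\cal Q(W)$, so the plan is to prove the reverse inclusions $\cal S(W)\subseteq\textsf{T}_p\cap\textsf{Sep}$ and $\cal Q(W)\subseteq\textsf{T}_p\cap\textsf{Sep}$; the three asserted equalities then follow immediately. First I would clear away the routine points: $W$ has an FDD, hence is separable, so its subspaces, its quotients, and all spaces isomorphic to them are separable; and by Theorem~\ref{ttheorem} the class $\textsf{T}_p$ is exactly the class of $p$-AUS-able (resp. AUF-able, when $p=\infty$) spaces, hence is isomorphism invariant. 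It therefore suffices to show that every closed subspace and every quotient of $W$ lies in $\textsf{T}_p$. Note moreover that $\textsf{J}$ is a bimonotone FDD of $W$ (by Schechtman's theorem it is a bimonotone FDD of $W(\textsf{J})$, and the press down construction preserves bimonotonicity), so $W\in\textsf{T}_p$ by Proposition~\ref{resid1}.

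For subspaces I would use that $p$-AUS-ability (resp. AUF-ability) is hereditary, which follows from the monotonicity $\overline{\rho}_Y(\tau)\le\overline{\rho}_X(\tau)$ for any subspace $Y$ of $X$ and any $\tau\ge0$. This inequality is immediate from the definition of $\overline{\rho}$: for $E\in\co(X)$ one has $E\cap Y\in\co(Y)$ with $B_{E\cap Y}\subseteq B_E$, and also $B_Y\subseteq B_X$, whence $\sup_{y\in B_Y}\inf_{E'\in\co(Y)}\sup_{x\in B_{E'}}\|y+\tau x\|\le\sup_{y\in B_X}\inf_{E\in\co(X)}\sup_{x\in B_E}\|y+\tau x\|$. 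Consequently the restriction to $Y$ of an equivalent $p$-AUS (resp. AUF) norm on $X$ is again $p$-AUS (resp. AUF); applying this with $X=W$ and using Theorem~\ref{ttheorem} yields $\cal S(W)\subseteq\textsf{T}_p$.

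For quotients I would switch to the dual characterization in Theorem~\ref{ttheorem}(iv): renorm $W$ so that its dual norm is $q$-AUC$^*$, say $\overline{\delta}^*_W(\tau)\ge c\tau^q$ on $(0,1)$ for some $c>0$. Let $Q=W/V$ be a quotient, which we may assume infinite dimensional. Then $Q^*$ is isometric to the weak$^*$-closed subspace $V^\perp$ of $W^*$ with the restricted norm, the weak$^*$ topology of $Q^*$ corresponding to the relative weak$^*$ topology of $V^\perp$. I claim $\overline{\delta}^*_Q(\tau)\ge\overline{\delta}^*_W(\tau)$ on $(0,1)$. Fix such $\tau$, an $x^*\in S_{Q^*}\subseteq S_{W^*}$, and $\varepsilon>0$, and choose $F\in\co^*(W^*)$ with $\inf_{y^*\in S_F}\{\|x^*+\tau y^*\|-1\}>\overline{\delta}^*_W(\tau)-\varepsilon$. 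Being the intersection of two weak$^*$-closed subspaces, one of them of finite codimension in $W^*$, the subspace $F\cap V^\perp$ lies in $\co^*(Q^*)$; and since $F\cap V^\perp\subseteq F$ we have $S_{F\cap V^\perp}\subseteq S_F$, so $\inf_{y^*\in S_{F\cap V^\perp}}\{\|x^*+\tau y^*\|-1\}>\overline{\delta}^*_W(\tau)-\varepsilon$. Letting $\varepsilon\to0$ and then taking the infimum over $x^*\in S_{Q^*}$ proves the claim, so $\overline{\delta}^*_Q(\tau)\ge c\tau^q$; thus the quotient norm on $Q$ has $q$-AUC$^*$ dual and $Q\in\textsf{T}_p$ by Theorem~\ref{ttheorem}(iv). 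Hence $\cal Q(W)\subseteq\textsf{T}_p$ and the proof is complete.

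The whole argument is essentially formal except for the quotient step, whose real content is that $q$-AUC$^*$-ness of a dual norm descends to its weak$^*$-closed subspaces, i.e. to duals of quotients --- the precise dual of the subspace stability of $p$-AUS used in the first half. The one point to watch is the behaviour of the families $\co$ and $\co^*$ under restriction; but because both $\overline{\rho}$ and $\overline{\delta}^*$ require only a one-sided estimate for an infimum, resp. a supremum, over those families, intersecting with $Y$, resp. with $V^\perp$, suffices, and no Hahn--Banach lifting is needed. For $p=\infty$ the argument specialises to a proof of the Johnson--Zippin result recalled before the statement.
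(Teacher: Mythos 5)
Your proof is correct and follows essentially the same route as the paper's: the forward inclusions come from Theorem~\ref{universal_t}, membership of $W_{\wedge,p}(\textsf{J})$ in $\textsf{T}_p$ from Proposition~\ref{resid1}, heredity to subspaces from the monotonicity of $\overline{\rho}$, and stability under quotients from characterization $(iv)$ of Theorem~\ref{ttheorem} via the weak$^*$-closed subspace $V^\perp\cong Q^*$. The only difference is that the paper states these stability facts in one line without proof, whereas you supply the (correct) verifications of the moduli inequalities.
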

	
	\begin{proof} By Proposition \ref{resid1}, $W_{\wedge,p}(\textsf{{J}})$ has $\textsf{{T}}_p$. The property $\textsf{{T}}_p$ passes clearly to subspaces and also passes to quotients, by characterization $(iv)$ of Theorem \ref{ttheorem}. The other inclusions are insured by Theorem \ref{universal_t}.
		
		\begin{remark} Note that  $W_{\wedge,\infty}(\textsf{J})$ is not isomorphic to $c_0$. For instance, because $W_{\wedge,\infty}(\textsf{J})^*$ uniformly contains the $\ell_\infty^n$'s while $\ell_1$ does not. We recall in passing that $\textsf{{T}}_\infty\cap \textsf{{Sep}}=\cal S(c_0)$ and also that  $\cal Q(c_0) \subsetneq \cal S(c_0)$. See \cite{JohnsonZippin} for the inclusion $\cal Q(c_0) \subset \cal S(c_0)$ and note that the dual of a quotient of $c_0$ has cotype $2$ unlike the dual of  $Y=(\sum_{n=1}^\infty \ell_1^n)_{c_0}$ and $Y$ linearly embeds into $c_0$.
		\end{remark}

	\end{proof}
	
	\begin{proposition} Let $p\in (1,\infty]$ and $\textsf{\emph{J}}$ be a Banach-Mazur dense sequence of finite dimensional normed spaces. Then, the  space $\ell_p(W_{\wedge,p}(\textsf{\emph{J}}))$ ($c_0(W_{\wedge,\infty}(\textsf{\emph{J}}))$ if $p=\infty$) is isomorphic to  $W_{\wedge,p}(\textsf{\emph{J}})$. 
	\end{proposition}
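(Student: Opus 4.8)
\emph{Overview.} Throughout write $W:=W_{\wedge,p}(\textsf{J})$. The plan is to exhibit $\ell_p(W)$ (respectively $c_0(W)$ when $p=\infty$) as a press down space $Z_{\wedge,p}(\textsf{H})$ for a suitable Banach space $Z$ with a bimonotone FDD $\textsf{H}$, and then to run Pe\l czy\'nski's decomposition method using Proposition \ref{Pressdownuniversal}. For the construction, let $Z=\ell_p(W(\textsf{J}))$ (resp. $c_0(W(\textsf{J}))$ if $p=\infty$), the direct sum of copies $W(\textsf{J})^{(k)}$, $k\ge 1$, of Schechtman's space, each carrying its bimonotone FDD $\textsf{J}^{(k)}=(J_n^{(k)})_n$. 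Enumerate $\{J_n^{(k)}:n,k\ge 1\}$ as a single sequence $\textsf{H}=(H_m)_m$ by fixing a bijection $\psi\colon\Ndb\to\Ndb\times\Ndb$ which, for each fixed $k$, is increasing in its first coordinate (e.g. the antidiagonal enumeration). Using bimonotonicity of the $\textsf{J}^{(k)}$ and dominated convergence for the FDD tails, one checks that $\textsf{H}$ is a bimonotone FDD of $Z$, and that for every $\textsf{H}$-interval $I$ and $z=(z^{(k)})_k\in Z$ the set $I^{(k)}:=\{n:\psi^{-1}(n,k)\in I\}$ is an interval of $\textsf{J}^{(k)}$ with $\|Iz\|_Z^p=\sum_k\|I^{(k)}z^{(k)}\|_{W(\textsf{J})}^p$ (with $\sup_k$ in place of $\ell_p$-sums if $p=\infty$).

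\emph{Step 1: $Z_{\wedge,p}(\textsf{H})$ is isometric to $\ell_p(W)$.} On the common vector space $c_{00}(\textsf{H})=\bigoplus_k c_{00}(\textsf{J}^{(k)})$ I would prove $\|z\|_{\wedge,p,Z}=\big(\sum_k\|z^{(k)}\|_{\wedge,p,W(\textsf{J})}^p\big)^{1/p}$ (with $\sup_k$ if $p=\infty$), which by passing to completions (the left-hand side is dense in $Z_{\wedge,p}(\textsf{H})$, the right-hand side in $\ell_p(W)$) gives the claim. First one establishes $[z]_{\wedge,p,Z}^p=\sum_k[z^{(k)}]_{\wedge,p,W(\textsf{J})}^p$: the inequality ``$\ge$'' is clear since any $\textsf{H}$-interval partition restricts on each copy to an interval partition of $\textsf{J}^{(k)}$; for ``$\le$'' one chooses, via Proposition \ref{stubs}, optimal finite interval partitions of the $z^{(k)}$ and glues their cut points into a single $\textsf{H}$-interval partition — here injectivity of $\psi^{-1}$ guarantees that cut points produced by different copies never collide, so the restriction to copy $k$ reproduces exactly the chosen partition of $z^{(k)}$. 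Given this identity, ``$\ge$'' for the norm follows from the definition of $\|\cdot\|_{\wedge,p}$ as an infimum of sums together with Minkowski's inequality in $\ell_p$ along the copies; for ``$\le$'', normalize so that $\sum_k t_k^p=1$ where $t_k:=\|z^{(k)}\|_W$, approximate $z^{(k)}/t_k$ (when $t_k>0$) by a convex combination $\frac{1}{N}\sum_{i=1}^N u_i^{(k)}$ with $[u_i^{(k)}]_{\wedge,p,W(\textsf{J})}\le 1$ and a common $N$ (after a rational approximation of the weights), and observe that $w_i:=(t_1u_i^{(1)},\dots,t_Ku_i^{(K)})$ satisfies $[w_i]_{\wedge,p,Z}^p\le\sum_kt_k^p\le 1$ while $z\approx\frac{1}{N}\sum_iw_i$; since the unit ball of $Z_{\wedge,p}(\textsf{H})$ is the closed convex hull of $\{w:[w]_{\wedge,p,Z}\le 1\}$ (as in the proof of Lemma \ref{pressdown_conv}), this yields $\|z\|_{\wedge,p,Z}\le 1$.

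\emph{Step 2: conclusion.} By Proposition \ref{Pressdownuniversal} applied to $Z$ with its bimonotone FDD $\textsf{H}$, the space $\ell_p(W)=Z_{\wedge,p}(\textsf{H})$ is isomorphic to a complemented subspace of $W$. Conversely, $W$ is isometric to a $1$-complemented subspace of $\ell_p(W)$ (its first coordinate). Since $\ell_p(\ell_p(W))\cong\ell_p(W)$ (and $c_0(c_0(W))\cong c_0(W)$ when $p=\infty$), Pe\l czy\'nski's decomposition method — applied with $X=\ell_p(W)$ and $Y=W$, using that $X$ is complemented in $Y$, $Y$ is complemented in $X$, and $X\cong\ell_p(X)$ — gives $\ell_p(W)\cong W$ (resp. $c_0(W)\cong W$), which is the assertion.

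\emph{Main obstacle.} The crux is Step 1: showing that the press down norm on $Z_{\wedge,p}(\textsf{H})$ genuinely decouples along the copies. The two delicate points are the ``$\le$'' half of the identity for $[\cdot]_{\wedge,p,Z}$, which hinges on the injectivity and copywise monotonicity of the enumeration $\psi^{-1}$ so that the gluing of cut points is legitimate, and the ``$\le$'' half of the norm identity, which requires the convex-hull description of the press down unit ball together with the rescaling by the weights $t_k$ above.
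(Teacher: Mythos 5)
Your Step 2 and the overall architecture (a complemented copy of the sum inside $W_{\wedge,p}(\textsf{J})$ via Proposition \ref{Pressdownuniversal}, then Pe\l czy\'nski's decomposition) coincide with the paper's. The problem is Step 1, where you depart from the paper: you press down the $\ell_p$-sum of the \emph{un-pressed} Schechtman spaces, $Z=\ell_p(W(\textsf{J}))$, and claim that press down commutes with the sum, i.e.\ $[z]_{\wedge,p,Z}^p=\sum_k[z^{(k)}]_{\wedge,p}^p$. The ``$\le$'' half of this identity is not established by your gluing, and the justification you give misidentifies the obstacle. The issue is not whether cut points of different copies \emph{collide}: a cut demanded by copy $k'$ sits between two consecutive $\textsf{H}$-positions and will in general separate two consecutive positions of copy $k$ that the optimal partition of $z^{(k)}$ keeps together. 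Hence the restriction of the glued partition to copy $k$ is a proper \emph{refinement} of the chosen one, and refining can strictly increase $\sum_i\|I_iz^{(k)}\|^p$ (for a $c_0$-like vector $\sum_{j=1}^n e_j$ the one-interval partition gives $1$ while singletons give $n$). Quantitatively, take $p=2$ and two interleaved copies with $u^{(1)}=n^{-1/2}\sum_{j\le n}e_j$ supported on $\ell_1$-like blocks (optimal partition: singletons, $[u^{(1)}]_{\wedge,2}=1$) and $u^{(2)}=\sum_{j\le n}e_j$ on $c_0$-like blocks (optimal partition: one interval, $[u^{(2)}]_{\wedge,2}=1$): a common partition into intervals of length $2L$ yields $\sum_i\|I_iz\|_Z^2\approx L+n/L\ge 2\sqrt n$, and one checks that every common partition does at least this badly, so $[z]_{\wedge,2,Z}^2\gtrsim\sqrt{n}$ while $[u^{(1)}]^2_{\wedge,2}+[u^{(2)}]^2_{\wedge,2}=2$. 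Your convexity step needs exactly the failed inequality $[w_i]_{\wedge,p,Z}^p\le\sum_kt_k^p[u_i^{(k)}]_{\wedge,p}^p$, so it inherits the same gap; nothing in your argument (which uses only bimonotonicity and a copywise increasing enumeration) rules out this behaviour.

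The commutation question can be avoided entirely, and this is what the paper does: take $Z=\ell_p(W_{\wedge,p}(\textsf{J}))$ itself, i.e.\ the sum of the \emph{already pressed-down} spaces, with the interleaved, copywise increasing FDD $\textsf{K}$. By Proposition \ref{resid1} each summand satisfies upper $p$ block estimates with constant $1$, hence so does $Z$ with respect to $\textsf{K}$ (sum the copywise estimates); iterating the estimate over any interval partition gives $\|z\|_Z\le[z]_{\wedge,p}\le\|z\|_Z$, so $Z_{\wedge,p}(\textsf{K})$ is just $Z$ and Proposition \ref{Pressdownuniversal} immediately exhibits $\ell_p(W_{\wedge,p}(\textsf{J}))$ as a complemented subspace of $W_{\wedge,p}(\textsf{J})$. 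With this replacement your Step 2 goes through unchanged.
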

	
	\begin{proof} We treat the case $p\in (1,\infty)$ (the case $p=\infty$ is identical). Denote $X=W_{\wedge,p}(\textsf{{J}})$. We first show that $Z=\ell_p(X)$ is isomorphic to a complemented subspace of $X$. Indeed, there exists a bijection $\Phi:\Ndb \to \Ndb \times \Ndb$ such that the space $Z$ has an FDD $\textsf{K}$ with the property that, $K_n=J_{k}$ if $\Phi(n)=(i,k)$ and $\|x+y\|_{Z}^p \le \|x\|_{Z}^p+\|y\|_{Z}^p$, whenever $x,y \in c_{00}(\textsf{{K}})$ with $x<y$. Clearly, this upper $\ell_p$ block estimate implies that $Z_{\wedge,p}(\textsf{{K}})$ is isomorphic to $Z$. It then follows from Proposition \ref{Pressdownuniversal} that $Z$ is isomorphic to a complemented subspace of $X$. 
		
		We now conclude the proof with Pe\l czynski's classical decomposition method. Let $E$ be a subspace of $X$ such that $X\simeq \ell_p(X)\oplus E$. Then 
		\begin{align*}
			\ell_p(X)&\simeq \ell_p(\ell_p(X)\oplus_p E)\simeq \ell_p(\ell_p(X))\oplus_p \ell_p(E) \simeq \ell_p(\ell_p(X))\oplus_p \ell_p(E)\oplus_p E\\ 
			&\simeq \ell_p(\ell_p(X)\oplus_p E)\oplus_p E \simeq \ell_p(X)\oplus_p E\simeq X.
		\end{align*}
	\end{proof}
	
	We conclude this section by showing that $W_{\wedge,p}(\textsf{{J}})$ does not depend (up to isomorphism) on the choice of the dense sequence $\textsf{{J}}$.
	
	\begin{proposition} Let $p\in (1,\infty]$ and $\textsf{\emph{J}}$ and $\textsf{\emph{K}}$ be two Banach-Mazur dense sequences of finite dimensional normed spaces. Then $W_{\wedge,p}(\textsf{\emph{J}})$ is isomorphic to $W_{\wedge,p}(\textsf{\emph{K}})$.
	\end{proposition}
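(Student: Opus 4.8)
The plan is to use Pełczyński's decomposition method again, exactly as in the previous proposition, but now to compare the two universal spaces rather than a space with a power of itself. Write $X=W_{\wedge,p}(\textsf{J})$ and $X'=W_{\wedge,p}(\textsf{K})$. The key observation is that both $X$ and $X'$ carry a bimonotone FDD (the sequences $\textsf{J}$ and $\textsf{K}$ respectively) after passing to the press down norm, so Proposition \ref{Pressdownuniversal} applies to each of them. Moreover, Proposition \ref{resid1} shows that each lies in $\textsf{T}_p\cap\textsf{Sep}$, so Theorem \ref{universal_t} tells us that $X'$ is isomorphic to a subspace of $X$, and $X$ is isomorphic to a subspace of $X'$. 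But we need \emph{complemented} embeddings to run the decomposition method, and this is where Proposition \ref{Pressdownuniversal} does the real work: since $X'$ has a bimonotone FDD $\textsf{K}$ and $\textsf{K}$ is itself Banach–Mazur dense, Schechtman's construction gives integers $m_1<m_2<\ldots$ and an operator $A$ with $A(K_n)=J_{m_n}$ whose (properly normed) extension $\tilde A: X'=(X')_{\wedge,p}(\textsf{K})\to W_{\wedge,p}(\textsf{J})=X$ is an isomorphic embedding with $1$-complemented range. (Here I am implicitly using, as in the proof that $\ell_p(X)\simeq X$, that $(X')_{\wedge,p}(\textsf{K})$ is isomorphic to $X'$ because the press down norm of a space that already has the constant-$1$ upper $\ell_p$ block estimate is equivalent to the original norm.) Symmetrically, $X$ is isomorphic to a complemented subspace of $X'$.

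So I would organize the argument in four steps. First, record that $X$ and $X'$ both have bimonotone FDD's indexed by Banach–Mazur dense sequences, hence both are of the form $W(\cdot)$ for Schechtman's construction, and both satisfy the constant-$1$ upper $\ell_p$ block estimate by Proposition \ref{resid1}. Second, apply Proposition \ref{Pressdownuniversal} with the roles reversed in each direction to obtain $X\simeq $ complemented subspace of $X'$ and $X'\simeq$ complemented subspace of $X$; the only subtlety to verify is that $(X')_{\wedge,p}(\textsf{K})\simeq X'$, which follows because the press down norm dominates and is dominated by the original norm when that norm already has upper $\ell_p$ block estimates with constant $1$ — the same remark used silently in the previous proof. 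Third, use the $\ell_p$-sum stability from the preceding proposition: $\ell_p(X)\simeq X$ and $\ell_p(X')\simeq X'$. Fourth, run Pełczyński decomposition: writing $X\simeq X'\oplus E$ and $X'\simeq X\oplus F$ for suitable $E,F$, the standard chain of isomorphisms using $\ell_p$-stability yields $X\simeq X'$.

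Actually the cleanest route bypasses $E$ and $F$ entirely: since $X\simeq$ complemented in $X'$ and $X'\simeq$ complemented in $X$, write $X\oplus G\simeq X'$ and $X'\oplus G'\simeq X$ for some $G,G'$, and then
\[
X\simeq \ell_p(X)\simeq \ell_p(X'\oplus G')\simeq \ell_p(X')\oplus \ell_p(G')\simeq X'\oplus\big(\ell_p(G')\big),
\]
and symmetrically $X'\simeq X\oplus\ell_p(G)$; combining gives $X\simeq X\oplus\ell_p(G)\oplus\ell_p(G')$ and one checks both orderings collapse. I expect the main obstacle to be purely bookkeeping: making sure that the extension $\tilde A$ of Schechtman's operator really does land inside the press down space with the right estimates and complementation constant — in other words, that Proposition \ref{Pressdownuniversal} is being invoked with its hypotheses genuinely met when the ``source'' space is itself one of the $W_{\wedge,p}$ spaces. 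Once that is granted, everything else is the routine decomposition-method calculation, entirely parallel to the proof of the preceding proposition, and no new idea is needed for the case $p=\infty$.
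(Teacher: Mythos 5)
Your argument is correct and is essentially the paper's own proof: both obtain mutual complemented embeddings from Proposition \ref{Pressdownuniversal} (using, as you note, that the press down norm is equivalent to the original one on a space already satisfying constant-$1$ upper $\ell_p$ block estimates), invoke the preceding proposition for $\ell_p(X)\simeq X$, and finish with Pe\l czy\'nski's decomposition method. The only cosmetic difference is in how the final accordion chain is written; your ``four step'' version with $X\simeq X'\oplus E$ is exactly the paper's computation.
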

	
	\begin{proof} Denote $X=W_{\wedge,p}(\textsf{{J}})$ and  $Y=W_{\wedge,p}(\textsf{{K}})$. As in the previous proof, we deduce from Proposition \ref{Pressdownuniversal} that $Y$ is isomorphic to a complemented subspace of $X$ and $X$ is isomorphic to a complemented subspace of $Y$. Let $E$ be a subspace of $X$ such that $X\simeq Y \oplus E$. We can now apply Pe\l czynski's decomposition method  together with the previous proposition to get:
		\begin{align*}
			X\oplus_p Y&\simeq \ell_p(Y\oplus_p E)\oplus_p Y \simeq \ell_p(Y)\oplus_p Y \oplus_p \ell_p(E) \simeq \ell_p(Y)\oplus_p \ell_p(E)\\
			&\simeq \ell_p(Y\oplus_p E) \simeq \ell_p(X) \simeq X.
		\end{align*}
		For symmetric reasons, $X\oplus_p Y \simeq Y$. This finishes the proof. 
	\end{proof}

	\section{Complexity of the class of separable $p$-AUS-able spaces}\label{descriptive}
	
	\subsection{Preliminaries}
	
	We recall the setting introduced by B. Bossard in \cite{Bossard2002} in order to apply the tools from descriptive set theory to the class $\textsf{Sep}$ of separable Banach spaces. We also refer the reader to the more recent paper by G. Godefroy and J. Saint-Raymond \cite{GSR}, where an even more complete topological frame is presented. 
	
	A \emph{Polish space} (resp. \emph{topology}) is a separable completely metrizable space (resp. topology). A set $X$ equipped with a $\sigma$-algebra is called a \emph{standard Borel space} if the $\sigma$-algebra is generated by a Polish topology on $X$. A subset of such a  standard Borel space $X$ is called \emph{Borel} if it is an element of the corresponding $\sigma$-algebra and it is called \emph{analytic} (or a $\Sigma_1^1$-set) if there exist a standard Borel space $Y$ and a Borel subset $B$ of $X \times Y$ such that $A$ is the projection of $B$ on the first coordinate. The complement of an analytic set is called a \emph{coanalytic} set (or a $\Pi_1^1$ set). A subset $A$ of a standard Borel space $X$ is called \emph{$\Sigma_1^1$-hard} if for every $\Sigma_1^1$ subset $B$ of a standard Borel space $Y$, there exists a Borel map $f:Y \to X$ such that $f^{-1}(A)=B$ and it is called \emph{$\Sigma_1^1$-complete} if it is both $\Sigma_1^1$ and $\Sigma_1^1$-hard. We refer the reader to the textbook \cite{Kechris} for a thorough exposition of descriptive set theory. 
	
	Let $X$ be a Polish space. Then, the set $\cal F(X)$ of all closed subsets of $X$ can be equipped with its \emph{Effros-Borel structure}, defined as the $\sigma$-algebra generated by the sets $\{F \in \cal F(X),\ F\cap U\neq \emptyset\}$, where $U$ varies over the open subsets of $X$. Equipped with this $\sigma$-algebra, $\cal F(X)$ is a standard Borel space. 
	
	Following Bossard \cite{Bossard2002}, we now introduce the fundamental coding of separable Banach spaces. It is well known that $C(\Delta)$, the space of scalar valued continuous functions on the Cantor space $\Delta=\{0,1\}^\Ndb$, equipped with the sup-norm, contains an isometric linear copy of every separable Banach space. We equip $\cal F(C(\Delta))$ with its corresponding Effros-Borel structure. Then, we denote 
	$$\textsf{SB}=\{F\in \cal F(C(\Delta)),\ F\ \text{is a linear subspace of}\ C(\Delta)\},$$
	considered as a subspace of  $\cal F(C(\Delta))$. Then $\textsf{SB}$ is a Borel subset of $\cal F(C(\Delta))$ (\cite{Bossard2002}, Proposition 2.2) and therefore a standard Borel space, that we call the \emph{standard Borel space of separable Banach spaces}. 
	
	Let us now denote $\simeq$ the isomorphism equivalence relation on $\textsf{SB}$. The \emph{fundamental coding of separable Banach spaces} is the quotient map $c:\textsf{SB} \to  \textsf{SB}/\simeq$. We can now give the following definition.
	
	\begin{definition} A family $\textsf{G}\subset \textsf{SB}/\simeq$ is Borel (resp. analytic, coanalytic) if $c^{-1}(\textsf{G})$  is Borel (resp. analytic, coanalytic) in $\textsf{SB}$. 
	\end{definition}
	
	It is worth noting that there are other natural codings of separable Banach spaces, for instance as quotients of $\ell_1$. They yield the same definition of Borel or analytic classes of separable Banach spaces, as it is shown in \cite{Bossard2002}.

	\subsection{The class  $\textsf{Sep} \cap\textsf{T}_p$ is analytic complete.}
	
	It is easily seen that the class of separable Banach spaces that linearly embed into a fixed separable Banach space $Z$ is $\Sigma_1^1$ (see Lemma 3.6. in \cite{Bossard1994} for instance). So, it follows immediately from the existence of a universal space for the class $\textsf{Sep} \cap \textsf{T}_p$ that $\textsf{Sep} \cap \textsf{T}_p$ is $\Sigma_1^1$. The main purpose of this section will be to show that this is optimal.
	
	\begin{theorem}\label{complete} Let $p \in (1,\infty]$. Then, the class  $\textsf{\emph{Sep}} \cap\textsf{\emph{T}}_p$ is $\Sigma_1^1$-complete.
	\end{theorem}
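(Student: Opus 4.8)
Since $\textsf{Sep}\cap\textsf{T}_p$ has already been observed to be $\Sigma_1^1$ (it equals $\mathcal{S}(W_{\wedge,p}(\textsf{J}))$ by Theorem~\ref{universal_t} together with Proposition~\ref{resid1}, and embeddability into a fixed separable space is $\Sigma_1^1$), it remains only to prove $\Sigma_1^1$-hardness, for which it suffices to Borel-reduce to $\textsf{Sep}\cap\textsf{T}_p$ one fixed $\Sigma_1^1$-complete set. I would take the set $\textsf{IF}$ of ill-founded trees on $\Ndb$, that is, those $T\subseteq\Ndb^{<\omega}$ closed under initial segments which admit an infinite branch in $\Ndb^\omega$; this is $\Sigma_1^1$-complete (see \cite{Kechris}). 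The goal is thus a Borel map $T\mapsto X_T$ from the standard Borel space of trees into $\textsf{SB}$ with $X_T\in\textsf{T}_p\iff T\in\textsf{IF}$. I stress at the outset that the direction is forced on us: since $\textsf{Sep}\cap\textsf{T}_p$ is $\Sigma_1^1$ it cannot be $\Pi_1^1$-hard (that would give $\Pi_1^1\subseteq\Sigma_1^1$), so one must map \emph{ill-founded} trees into $\textsf{T}_p$ and well-founded trees out of it. An infinite branch must therefore \emph{produce}, rather than destroy, the asymptotic smoothness.

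I would build $X_T$ as a Tsirelson-type tree space: a space with a tree-indexed FDD $(H_t)_{t\in T}$, the blocks $H_t$ drawn from a Banach--Mazur dense sequence $\textsf{J}$, under a norm engineered so that, on the one hand, vectors supported along a single chain (a path in $T$) obey an upper $\ell_p$ estimate with constant $1$ — the branchwise effect of the press-down norm of Proposition~\ref{resid1} — while, on the other hand, a weakly null sequence that is \emph{forced off} every branch, i.e. eventually supported on pairwise incomparable nodes, incurs a lower $\ell_{p'}$ estimate for a fixed $p'<p$ (a lower $\ell_1$ estimate when $p=\infty$). Since $\|a\|_{p'}\geq\|a\|_p$ with the ratio $\|a\|_{p'}/\|a\|_p$ unbounded over normalized vectors of growing support, such a sequence carries no upper $\ell_p$ estimate at all, so $\|(x_i)_{i=1}^\infty\|_q^w=\infty$. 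The assignment is Borel because ``$t\in T$'' is clopen in $2^{\Ndb^{<\omega}}$, so the generators of $X_T$ vary Borel-measurably with $T$; presenting $X_T$ through an explicit basis with Borel coordinates yields a Borel point of $\textsf{SB}$ in the sense of \cite{Bossard2002}.

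The membership test is condition (ii) of Theorem~\ref{ttheorem}: $X_T\in\textsf{T}_p$ iff some constant $c$ works for \emph{every} weakly null tree. For $T\in\textsf{IF}$, fix an infinite branch $\beta$; I would show $X_T$ has $\ell_p$ upper tree estimates with a uniform constant, the branch $\beta$ supplying a blocking along which the norm reduces to the upper-$\ell_p$ (press-down) regime, equivalently an isomorphic embedding of $X_T$ into $W_{\wedge,p}(\textsf{J})$, whence $X_T\in\textsf{T}_p$ by Proposition~\ref{resid1}. For $T$ well-founded, I would refute condition (ii) for every $c$: using that all branches of $T$ terminate, I would, for each $c$, assemble a weakly null tree in $B_{X_T}$ all of whose branches are eventually supported on incomparable nodes and hence satisfy $\|(x_{\tau|_i})_{i=1}^\infty\|_q^w>c$, so that no uniform constant exists and $X_T\notin\textsf{T}_p$.

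The crux is this well-founded direction, together with reconciling the \emph{global} quantifier in condition (ii) with the combinatorial tree. The failure of $\textsf{T}_p$ must be uniform over all well-founded $T$ irrespective of their (countable, unbounded) rank, so it cannot be an ordinal-index estimate but must genuinely detect the absence of an infinite branch. The delicate points are, first, to design the norm so that \emph{every} weakly null sequence in $X_T$ shadows a path in $T$ — ruling out a rogue ``$\ell_p$-good'' sequence not aligned with any branch — which I would handle by a gliding-hump and stabilization analysis of the admissibility constraints defining the norm; and second, to prevent combinatorially small (for instance finite) trees from collapsing $X_T$ to something spuriously in $\textsf{T}_p$, which I would ensure by letting the off-branch $\ell_{p'}$ gadget live on a free $\Ndb$-coordinate attached at every node, so that arbitrarily costly finite configurations are always available while a single infinite branch still suffices to restore the uniform upper $\ell_p$ estimate. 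The case $p=\infty$ should recover Kurka's theorem \cite{Kurka2018} that $\mathcal{S}(c_0)=\textsf{T}_\infty\cap\textsf{Sep}$ is $\Sigma_1^1$-complete, with $c_0$ playing the role of $W_{\wedge,\infty}(\textsf{J})$.
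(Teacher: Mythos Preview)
Your proposal correctly isolates the $\Sigma_1^1$ upper bound and the required direction of the reduction, but the construction you sketch has a genuine gap at exactly the point you yourself flag as ``the crux.'' You want a tree-indexed space $X_T$ in which chains carry upper $\ell_p$ estimates and antichains carry lower $\ell_{p'}$ estimates, with the additional property that a single infinite branch in $T$ forces $X_T\in\textsf{T}_p$. These features are in tension: if $T$ is, say, the union of one infinite branch $\beta$ and a disjoint well-founded subtree $T'$, then by your own antichain mechanism a weakly null sequence supported on pairwise incomparable nodes of $T'$ will fail every upper $\ell_p$ estimate --- and the branch $\beta$ cannot ``supply a blocking'' for vectors that never meet it. You assert that the off-branch gadget can be arranged so that ``a single infinite branch still suffices to restore the uniform upper $\ell_p$ estimate,'' but you give no mechanism by which a purely local feature of $T$ (the existence of one branch) produces a global effect on the norm of $X_T$; under the design you actually describe, it cannot.

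The paper avoids this trap by using a different complete set and a genuinely global mechanism. It reduces from $\{\mathcal M\in\mathcal K(2^{\Ndb}):\mathcal M\text{ meets }\mathcal I\}$ (Hurewicz, Theorem~\ref{Hurewicz}) via the Argyros--Deliyanni Tsirelson spaces $T_{\mathcal M}$. There the presence of a single infinite element in $\mathcal M$ is not a local combinatorial feature: it enlarges the family of admissible decompositions in the norming formula and thereby collapses the norm \emph{globally}, making $T_{\mathcal M}$ isomorphic to a $c_0$-sum of finite-dimensional blocks (Proposition~\ref{Kurka}(4)). To pass from $p=\infty$ to general $p$, the paper dualizes to $Y_{\mathcal M}=X_{\mathcal M}^*$, takes the $q$-convexification $Y_{\mathcal M}^q$, and sets $Z_{\mathcal M}=(Y_{\mathcal M}^q)_*$; when $\mathcal M$ contains an infinite set, $Z_{\mathcal M}$ is an $\ell_p$-sum and lies in $\textsf{T}_p$, while otherwise reflexivity of $X_{\mathcal M}$ and Lemma~\ref{L3} force $Z_{\mathcal M}\notin\textsf{T}_p$. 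The global-collapse phenomenon supplied by Proposition~\ref{Kurka}(4) is precisely the missing ingredient in your tree-FDD plan.
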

	
	The case $p=\infty$ was settled by O. Kurka in \cite{Kurka2018}, where he showed that the class of all Banach spaces that linearly embed into $c_0$ (which coincides with $\textsf{Sep} \cap\textsf{T}_\infty$) is $\Sigma_1^1$-complete. Our result will follow from an adaptation of Kurka's argument, which relies in particular on the use  of a Tsirelson-type Banach space constructed by S. A. Argyros and I. Deliyanni in \cite{ArgyrosDeliyanni1997} that we shall describe now. We denote $\cal K(2^\Ndb)$, the set of compact subsets of $2^\Ndb$, the power set of $\Ndb$. Then we equip $\cal K(2^\Ndb)$ with its Hausdorff topology. For $\cal M \in \cal K(2^\Ndb)$, one can define a space, denoted $\cal T s^*[\cal M,\frac12]$ in \cite{Kurka2018}, but that we will just  denote $T_{\cal M}$ here. Let us gather here the properties of $T_{\cal M}$ that we shall need (see \cite{Kurka2018}).
	
	\begin{proposition}\label{Kurka} Let $\cal M \in \cal K(2^\Ndb)$. Then
		\begin{enumerate}
			\item The canonical basis $(u_n)_{n=1}^\infty$ of $T_{\cal M}$ is normalized and $1$-unconditional.
			\item If $\cal M$ contains all $3$ elements sets, then $(u_n)_{n=1}^\infty$ is shrinking.
			\item If $\cal M$ consists only of finite sets, then $(u_n)_{n=1}^\infty$ is boundedly complete.
			\item If $\cal M$ contains an infinite set, then $T_{\cal M}$ is isomorphic to a $c_0$-sum of finite dimensional spaces. More precisely, there exists an infinite sequence $1=m_0<m_1<\ldots<m_k<\ldots$ such that for all $x=(x_n)\in c_{00}$:
			$$\sup_{k\in \Ndb}\Big\|\sum_{n=m_{k-1}}^{m_k-1} x_nu_n\Big\|_{T_{\cal M}} \le \Big\|\sum_{n=1}^\infty x_nu_n\Big\|_{T_{\cal M}} \le 2 \sup_{k\in \Ndb}\Big\|\sum_{n=m_{k-1}}^{m_k-1} x_nu_n\Big\|_{T_{\cal M}}.$$
			%\item There exists a Borel map $\Phi:\cal K(2^\Ndb)\to \cal S \cal E$ such that, for all $\cal M$ in $\cal K(2^\Ndb)$, $\Phi(\cal M)$ is isometric to $X_{\cal M}$.
		\end{enumerate}
	\end{proposition}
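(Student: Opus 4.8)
The plan is to read off all four assertions from the construction of $T_{\mathcal M}=\mathcal{T}s^*[\mathcal M,\tfrac12]$ as an Argyros--Deliyanni--type space (\cite{ArgyrosDeliyanni1997}); in fact all of (1)--(4) are established in \cite{Kurka2018}, so the most economical write-up simply quotes them from there, but here is how I would organise the arguments. Recall that $T_{\mathcal M}$ carries an implicit Tsirelson-type norm, in which replacing a vector by a single ``$\mathcal M$-admissible'' decomposition into successive pieces is penalised by the factor $\tfrac12$, and that compactness of $\mathcal M$ in $\mathcal K(2^{\Ndb})$ is exactly what makes this norm well defined on $c_{00}$. Statement (1) is then immediate from the shape of the norm equation: its right-hand side depends on $x=\sum_n x_nu_n$ only through the moduli $(|x_n|)_n$ and is nondecreasing in each of them, so $(u_n)$ is a $1$-unconditional (indeed bimonotone) basis; and since the admissibility penalty applied to a single non-zero coordinate only returns $\tfrac12\|u_n\|$, the fixed-point equation forces $\|u_n\|=1$.

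For (2) and (3) I would invoke the James dichotomy for unconditional bases (see \cite{LindenstraussTzafriri}): such a basis is shrinking precisely when the space contains no isomorphic copy of $\ell_1$, and boundedly complete precisely when it contains no isomorphic copy of $c_0$; moreover, by $1$-unconditionality, it is enough to rule these out among block subspaces. Thus (2) reduces to showing that admissibility of all $3$-element sets forces $T_{\mathcal M}$ to contain no $\ell_1$ --- this is the classical ``Tsirelson's space contains no $\ell_1$'' argument, in which, over any sufficiently far-out normalised block sequence, one builds a ternary tree of admissible decompositions, each of whose levels costs a factor $\tfrac12$, and a recursion on the tree shows block sums stay far below the corresponding $\ell_1$-sums. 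And (3) reduces to showing that when $\mathcal M$ has only finite members $T_{\mathcal M}$ contains no $c_0$: compactness then bounds the Cantor--Bendixson index of $\mathcal M$, so admissibility is dominated by a fixed Schreier family, which yields a lower $\ell_1$-type (``asymptotic $\ell_1$'') estimate along block sequences that is incompatible with $c_0$.

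For (4) the mechanism runs the other way: an infinite set $M=\{m_1<m_2<\cdots\}\in\mathcal M$ is ``absorbing''. It lets one decompose \emph{any} vector into its restrictions to the blocks cut out by $M$, while --- by compactness of $\mathcal M$ --- inside each such block only admissible decompositions of uniformly bounded complexity survive, so the block norms stabilise to fixed finite-dimensional norms and no further $\tfrac12$-iterations can accumulate. I would make this precise by extracting from $\mathcal M$ a blocking $1=m_0<m_1<\cdots$ (at the relevant Cantor--Bendixson level) and proving the two-sided comparison of $\|\cdot\|_{T_{\mathcal M}}$ with the associated $c_0$-sum, the constant $2$ coming from $\theta=\tfrac12$, so that $T_{\mathcal M}\cong\bigl(\sum_k F_k\bigr)_{c_0}$ with $F_k=\overline{\operatorname{span}}\{u_n : m_{k-1}\le n<m_k\}$. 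I expect this last step to be the main obstacle: turning the combinatorial hypothesis ``$\mathcal M$ contains an infinite set'' into the explicit blocking, and then establishing a \emph{uniform} two-sided estimate, genuinely requires controlling how the topology of $\mathcal M$ feeds into the implicit (fixed-point) norm --- whereas (2) and (3), though their underlying no-$\ell_1$ and asymptotic-$\ell_1$ computations are the technical heart, are otherwise ``standard Tsirelson'' and would in practice be cited from \cite{Kurka2018} and \cite{ArgyrosDeliyanni1997}.
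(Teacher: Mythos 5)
The paper gives no proof of this proposition at all: it is presented as a list of facts imported directly from \cite{Kurka2018} (``see \cite{Kurka2018}''), which is exactly what you propose to do, and your accompanying sketch of how those facts are established there (unconditionality read off the implicit norm equation, the James criteria reducing (2) and (3) to excluding $\ell_1$ and $c_0$, and the blocking induced by an infinite member of $\mathcal{M}$ for (4)) is consistent with Kurka's arguments. So your proposal is correct and takes essentially the same approach as the paper.
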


	A key tool in Kurka's argument is the following particular case of a theorem due to Hurewicz (see \cite{Kurka2018} and references therein).
	
	\begin{theorem}[Hurewicz]\label{Hurewicz} Let $\cal I$ be the set of all infinite subsets of $\Ndb$. Then $\{\cal M\in \cal K(2^\Ndb), \cal M \cap \cal I\neq \emptyset\}$ is $\Sigma_1^1$-complete.
	\end{theorem}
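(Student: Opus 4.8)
The plan is to prove the two halves of $\Sigma_1^1$-completeness separately: first that $H:=\{\cal M\in\cal K(2^\Ndb):\cal M\cap\cal I\neq\varnothing\}$ is analytic, and then that it is $\Sigma_1^1$-hard by building a Borel reduction to it from the canonical $\Sigma_1^1$-complete set of ill-founded trees on $\Ndb$. For the upper bound, I would recall that the membership relation $R=\{(\cal M,A)\in\cal K(2^\Ndb)\times 2^\Ndb:A\in\cal M\}$ is closed, a standard property of the Hausdorff topology on the hyperspace of a compact metric space, and that $\cal I=\bigcap_n\bigcup_{m\ge n}\{A:m\in A\}$ is a $G_\delta$, hence Borel, subset of $2^\Ndb$. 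Then $R\cap(\cal K(2^\Ndb)\times\cal I)$ is Borel and $H$ is exactly its projection onto the first coordinate, so $H\in\Sigma_1^1$.

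For hardness I would reduce from the set $\mathrm{IF}$ of ill-founded trees on $\Ndb$, i.e. those $T\subseteq\Ndb^{<\omega}$ that are closed under initial segments and satisfy $[T]\neq\varnothing$; this is a classical $\Sigma_1^1$-complete subset of the Polish space $\mathrm{Tr}$ of all trees on $\Ndb$ (see \cite{Kechris}). Fix any injection $c:\Ndb^{<\omega}\to\Ndb$ and, for $s\in\Ndb^{<\omega}$, encode the chain of initial segments of $s$ as the \emph{finite} set $A_s=\{c(s|_i):0\le i\le|s|\}\in 2^\Ndb$. I would then define $\cal M_T=\overline{\{A_s:s\in T\}}$, with the closure taken in $2^\Ndb$; being closed in the compact space $2^\Ndb$, it is automatically a point of $\cal K(2^\Ndb)$.

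The crux, and the step I expect to be the main obstacle, is to verify $\cal M_T\cap\cal I\neq\varnothing\iff T\in\mathrm{IF}$, the delicate direction being that passing to the closure must not manufacture a spurious infinite set out of a well-founded tree. If $\beta\in[T]$, then $A_{\beta|_n}$ increases to the infinite set $A_\beta=\{c(\beta|_i):i\ge 0\}$, so $A_\beta\in\cal M_T\cap\cal I$. Conversely, suppose $A\in\cal M_T$ is infinite; as $2^\Ndb$ is metrizable, write $A=\lim_k A_{s_k}$ with $s_k\in T$. Each $m\in A$ lies in $A_{s_k}$ for all large $k$, so $m=c(t)$ for a node $t\in T$ that is an initial segment of $s_k$ for all large $k$. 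Since $A$ is infinite this yields infinitely many distinct nodes $t\in T$; any finitely many of them are simultaneously initial segments of a single $s_k$ and hence pairwise $\prec$-comparable, so the whole collection is an infinite $\prec$-chain. Its union is a branch of $T$ (every initial segment of it is an initial segment of some node of the chain, hence in $T$), giving $T\in\mathrm{IF}$. This is the König-lemma-style heart of the argument.

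Finally I would check that $T\mapsto\cal M_T$ is Borel. The Borel structure of $\cal K(2^\Ndb)$ is generated by the sets $\{\cal M:\cal M\cap V\neq\varnothing\}$ with $V$ open, and for open $V$ one has $\overline{F}\cap V\neq\varnothing\iff F\cap V\neq\varnothing$; hence, for each $V$ in a countable clopen base,
\[
\{T:\cal M_T\cap V\neq\varnothing\}=\{T:\exists s\in T,\ A_s\in V\}=\bigcup_{s:\,A_s\in V}\{T:s\in T\}
\]
is a countable union of basic clopen subsets of $\mathrm{Tr}$, hence open and in particular Borel. Combining the three steps, $H$ is both $\Sigma_1^1$ and $\Sigma_1^1$-hard, and therefore $\Sigma_1^1$-complete.
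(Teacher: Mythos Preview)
The paper does not actually prove this theorem: it is quoted as a classical result of Hurewicz and used as a black box (``see \cite{Kurka2018} and references therein''), so there is no proof in the paper to compare against. Your argument is correct and is essentially the standard proof one finds in descriptive set theory texts such as \cite{Kechris}: the upper bound via projection of a Borel relation, and hardness via a Borel reduction from the $\Sigma_1^1$-complete set $\mathrm{IF}$ of ill-founded trees, encoding each node $s$ by the finite set of codes of its initial segments. The verification that no spurious infinite set appears in the closure when $T$ is well-founded---by extracting an infinite $\prec$-chain from the codes witnessed in a limit point---is exactly the expected K\H{o}nig-type step, and your check of Borelness of $T\mapsto\cal M_T$ via the Effros generators is fine (note that it suffices to test basic clopen $V$ since every open set in $2^\Ndb$ is a countable union of such, and the hit-set operation commutes with countable unions).
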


	We also need to recall the construction of the \emph{$q$-convexification} of a Banach space $X$. So let $q\in [1,\infty)$ and $X$ be a Banach space with a normalized $1$-unconditional basis $(e_n)_{n=1}^\infty$. Let 
	$$X^q=\Big\{x=(x_n)_{n=1}^\infty \in \Rdb^\Ndb,\ x^q=\sum_{n=1}^\infty |x_n|^qe_n \in X\Big\}$$
	and endow it with the norm $\|x\|_{X^q}=\|x^q\|_X^{1/q}$. We also denote $(e_n)_{n=1}^\infty$ the sequence of coordinate vectors in $X^q$. It is clear that $(e_n)_{n=1}^\infty$ is a  normalized $1$-unconditional basis of $X^q$ and that $X^1$ is isometric to $X$. Also, the triangle inequality implies that $X^q$ is $q$-convex with constant $1$, meaning that for any $x^1,\ldots,x^n \in X^q$
	$$\Big\|\sum_{j=1}^\infty\big(|x_j^1|^q+\cdots+|x_j^n|^q\big)^{1/q}e_j\Big\|_{X^q} \le \big(\|x^1\|_{X^q}^q+\cdots+\|x^n\|_{X^q}^q\big)^{1/q}.$$
	Note that it follows that if $x^1,\ldots,x^n \in X^q$ have disjoint supports with respect to $(e_n)_{n=1}^\infty$, then 
	$$\|x^1+\cdots+x^n\|_{X^q}^q\le \|x^1\|_{X^q}^q+\cdots+\|x^n\|_{X^q}^q.$$
	In particular, the norm of $X^q$ is clearly $q$-AUS and therefore  $X^q \in \textsf{T}_q$. Note also that, if $q>1$, the above inequality implies that $(e_n)$ is a shrinking basis of $X^q$.

	We shall need a few other properties of $X^q$ that we list in the next lemmas, in which we will always assume that $(e_n)_{n=1}^\infty$ is a normalized $1$-unconditional basis of the Banach space $X$ and $q\in [1,\infty)$.
	
	\begin{lemma}\label{L1} Assume moreover that there exist a constant $C\ge 1$ and an infinite sequence $1=m_0<m_1<\ldots<m_k<\ldots$ such that for all $x=(x_n)\in c_{00}$:
		$$\frac{1}{C}\sum_{k\in \Ndb}\Big\|\sum_{n=m_{k-1}}^{m_k-1} x_ne_n\Big\|_{X} \le \Big\|\sum_{n=1}^\infty x_ne_n\Big\|_{X} \le C \sum_{k\in \Ndb}\Big\|\sum_{n=m_{k-1}}^{m_k-1} x_ne_n\Big\|_{X}.$$
		Then $X^q$ is isomorphic to an $\ell_q$-sum of finite dimensional spaces.
	\end{lemma}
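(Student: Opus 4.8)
The plan is to identify $X^q$, up to isomorphism, with the $\ell_q$-sum of the $q$-convexifications of the finite-dimensional blocks singled out by the hypothesis; the whole point is that the $q$-convexification procedure converts the $\ell_1$-type blocking estimate satisfied by $X$ into an $\ell_q$-type estimate.

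First I would fix notation. For $k\in\Ndb$ let $F_k=\spa\{e_n:\,m_{k-1}\le n<m_k\}\subset X$, carrying the norm inherited from $X$ and the normalized $1$-unconditional basis $(e_n)_{m_{k-1}\le n<m_k}$, and let $F_k^q$ be its $q$-convexification, a finite-dimensional normed space. Given $x=(x_n)\in c_{00}$, write $x=\sum_k x^{(k)}$, where $x^{(k)}$ is the restriction of $x$ to the interval $[m_{k-1},m_k)$; then $x^{(k)}\in F_k^q$ and $\|x^{(k)}\|_{F_k^q}=\|x^{(k)}\|_{X^q}$ straight from the definitions.

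The key step is to apply the blocking hypothesis to the vector $\sum_n|x_n|^qe_n\in c_{00}$, whose $k$-th block is exactly $(x^{(k)})^q=\sum_{n=m_{k-1}}^{m_k-1}|x_n|^qe_n$, and which satisfies $\|(x^{(k)})^q\|_X=\|x^{(k)}\|_{X^q}^q$ and $\|\sum_n|x_n|^qe_n\|_X=\|x\|_{X^q}^q$. The hypothesis then reads
$$\frac{1}{C}\sum_{k}\|x^{(k)}\|_{X^q}^q\ \le\ \|x\|_{X^q}^q\ \le\ C\sum_{k}\|x^{(k)}\|_{X^q}^q,$$
and taking $q$-th roots shows that on $c_{00}$ the norm $\|\cdot\|_{X^q}$ is equivalent, with constant $C^{1/q}$ on each side, to the norm $x\mapsto\big(\sum_k\|x^{(k)}\|_{F_k^q}^q\big)^{1/q}$ of $\big(\sum_k F_k^q\big)_{\ell_q}$.

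Finally I would pass to completions: since $(e_n)_{n=1}^\infty$ is a basis of $X^q$, $c_{00}$ is dense in $X^q$, and the finitely supported vectors are dense in $\big(\sum_k F_k^q\big)_{\ell_q}$ because $q<\infty$; hence the regrouping map $x\mapsto(x^{(k)})_k$ on $c_{00}$ extends to an isomorphism of $X^q$ onto $\big(\sum_k F_k^q\big)_{\ell_q}$, which is an $\ell_q$-sum of finite-dimensional spaces, as required. There is no real obstacle: the content of the lemma is the single observation that feeding $(|x_n|^q)_n$ into the hypothesis turns the $\ell_1$-blocking of $X$ into an $\ell_q$-blocking of $X^q$, and the only points needing a word of justification are that each $F_k$ carries a normalized $1$-unconditional basis (so that $F_k^q$ is well defined) and the routine density/completion bookkeeping.
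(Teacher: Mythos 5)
Your proof is correct and follows exactly the paper's argument: the paper's entire proof consists of the single display obtained by feeding $\sum_n|x_n|^q e_n$ into the hypothesis, which is precisely your key step. Your additional remarks on the blocks $F_k^q$ and the density/completion bookkeeping are just the routine details the paper leaves implicit.
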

	
	\begin{proof} It is clear that for all $x=(x_n)\in c_{00}$,
		$$\frac{1}{C}\sum_{k\in \Ndb}\Big\|\sum_{n=m_{k-1}}^{m_k-1} x_ne_n\Big\|_{X^q}^q \le \Big\|\sum_{n=1}^\infty x_ne_n\Big\|_{X^q}^q \le C \sum_{k\in \Ndb}\Big\|\sum_{n=m_{k-1}}^{m_k-1} x_ne_n\Big\|_{X^q}^q.$$
		
	\end{proof}

	Assume now that $(e_n)_{n=1}^\infty$ is a boundedly complete basis of $X$. It is immediate to check that the coordinate vectors, still denoted $(e_n)_{n=1}^\infty$, form a boundedly complete basis of $X^q$. In this situation, we shall denote $(X^q)_*$ the predual of $X^q$ given by the closed linear span in $(X^q)^*$ of the dual basis of $(e_n)_{n=1}^\infty$. Recall that, if $q>1$, $(e_n)$ is also a shrinking basis of $X^q$, which is therefore reflexive. We can now prove the following.

	\begin{lemma}\label{L3} Let $q\in (1,\infty)$ and $p$ be its conjugate exponent. Assume that  $X$ is reflexive. Then $\ell_q$ is not isomorphic to any subspace of $X^q$. Moreover $(X^q)_*$ does not belong to $\textsf{\emph{T}}_p$. 
	\end{lemma}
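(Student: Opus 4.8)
The plan is to exploit the fact that $X^q$ is $q$-AUS with constant $1$ (as observed just before the lemma), hence reflexive and with a shrinking basis, so both of the claimed non-membership statements can be read off from asymptotic structure. For the first assertion, suppose toward a contradiction that $\ell_q$ embeds into $X^q$ via some isomorphism onto a subspace $Y$. First I would pass, via a gliding hump / perturbation argument, to a normalized block sequence $(y_k)$ of $(e_n)$ in $X^q$ which is equivalent to the $\ell_q$-basis; this is standard since $(e_n)$ is a shrinking (even $1$-unconditional) basis and $\ell_q$ has no nontrivial finite-dimensional decomposition obstruction. Now the key point is that a normalized block sequence in $X^q$ has an \emph{upper} $\ell_q$-estimate with constant $1$: indeed, blocks have disjoint supports, so by the displayed disjointly-supported inequality preceding the lemma, $\|\sum a_k y_k\|_{X^q}^q \le \sum |a_k|^q \|y_k\|_{X^q}^q = \sum|a_k|^q$. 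Since $(y_k)$ is also equivalent to the $\ell_q$-basis, it satisfies a matching \emph{lower} $\ell_q$-estimate; combining, $(y_k)$ is (after the constant-$1$ normalization) isometrically $\ell_q$, and in particular the closed span of $(y_k)$ is $1$-complemented-like in the asymptotic sense. The contradiction I would aim for: reflexivity of $X$ forces $X^q$ to have \emph{strictly better than $\ell_q$} asymptotic behavior — more precisely, if $X$ is reflexive then $(e_n)$ is weakly null, and one shows that no weakly null normalized block sequence of $X^q$ can span an isomorphic copy of $\ell_q$ because $X$ itself (being reflexive, with $1$-unconditional basis) cannot have $\ell_1$ spreading-model-like behavior in the relevant place. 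The cleanest route is: if $\ell_q \hookrightarrow X^q$ then, taking $q$-th powers coordinatewise, $\ell_1$ would embed (as a "$1$-convexified" copy) into $X$ — contradicting reflexivity of $X$, since $\ell_1$ is not reflexive and embeds complementably-enough to transfer non-reflexivity. I would make this precise by checking that the map $x \mapsto x^q$ sends the $\ell_q$-copy in $X^q$ to a subspace of $X$ isomorphic to $\ell_1$, using unconditionality to control the norm.

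For the "moreover" part, recall $p$ is conjugate to $q$ and $(X^q)_*$ is the canonical predual of the reflexive space $X^q$, so $(X^q)_* $ is reflexive and $((X^q)_*)^* = X^q$. If $(X^q)_*$ belonged to $\textsf{T}_p$, then by Theorem \ref{ttheorem}(iv) it would admit an equivalent norm whose dual is $q$-AUC$^*$; dualizing, $X^q$ would admit an equivalent $q$-AUC norm (here one uses Proposition \ref{moduliduality}(3) together with reflexivity to pass between the space and its dual cleanly — the roles of AUS/AUC swap under duality). But a reflexive space with a normalized $1$-unconditional basis admitting normalized block sequences isometrically equivalent to the $\ell_q$-basis (which we just produced, or rather: which exist whenever $X$ is, say, not already super-reflexive in the right sense) cannot be $q$-AUC, because $q$-AUC forces a strict lower $\ell_q$-type estimate on weakly null sequences with a modulus that is violated by an isometric $\ell_q$-block sequence. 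Concretely: in a $q$-AUC space, for a normalized weakly null $(z_k)$ one has $\liminf_k \|z + \tau z_k\| \ge (\|z\|^q + c\tau^q)^{1/q}$-type growth, which iterated gives $\|\sum_{k=1}^n z_k\| \gtrsim c^{1/q} n^{1/q}$ with the \emph{same} constant as the upper estimate only if $c$ can be taken $=1$; but having an isometric $\ell_q$ block sequence pins the upper constant at $1$ while $q$-AUC with $c>0$ would force a genuinely larger lower constant on sufficiently spread-out averages — a contradiction. I should double check the exact quantifier: the real obstruction is that $(X^q)_* \in \textsf{T}_p$ would make $X^q$ \emph{reflexive with $q$-AUC dual-type renorming}, equivalently $X^q$ is $q$-AUS-able with conjugate data, and combined with the already-established $q$-AUS-ability this would make $X^q$ essentially $\ell_q$-like, again pushing $\ell_1$ into $X$ and contradicting reflexivity.

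The main obstacle I anticipate is the first step: rigorously producing, from reflexivity of $X$ alone, the block sequence in $X^q$ equivalent to $\ell_q$ that drives both contradictions — and more importantly justifying that its \emph{existence} is actually forced. Reflexivity of $X$ by itself does not obviously produce $\ell_q$ inside $X^q$; rather, I suspect the intended argument is the reverse implication handled by contradiction: \emph{assume} $\ell_q \hookrightarrow X^q$ (resp. $(X^q)_* \in \textsf{T}_p$), derive via the $q$-convexification identity that $\ell_1 \hookrightarrow X$, and invoke non-reflexivity of $\ell_1$ against the hypothesis that $X$ is reflexive. So the heart of the proof is the transfer principle "$\ell_q$ in $X^q$ $\Rightarrow$ $\ell_1$ in $X$", which uses unconditionality and the coordinatewise $q$-th power map, plus the standard fact that an infinite-dimensional subspace of a space with unconditional basis can be moved to a block subspace by small perturbation. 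For the second statement, the additional ingredient is the duality dictionary of Proposition \ref{moduliduality}(3) and Theorem \ref{ttheorem}(iv), applied to the reflexive pair $((X^q)_*, X^q)$, to convert "$(X^q)_* \in \textsf{T}_p$" into a $q$-AUC renorming statement about $X^q$; the incompatibility of that with the upper-$\ell_q$-with-constant-$1$ structure (sharpened by any embedded $\ell_q$) closes the argument.
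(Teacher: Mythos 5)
Your first assertion is handled correctly and by essentially the same route as the paper: assume $\ell_q$ embeds in $X^q$, perturb to a normalized block sequence $(x_n)$ equivalent to the $\ell_q$-basis, and observe that the coordinatewise $q$-th powers $y_n=x_n^q$ satisfy
$\|\sum_i a_iy_i\|_X=\|\sum_i |a_i|^{1/q}x_i\|_{X^q}^q\geqslant C\sum_i|a_i|$
by $1$-unconditionality, so $\ell_1$ embeds in $X$, contradicting reflexivity. (One small correction: equivalence to the $\ell_q$-basis only gives a lower estimate with some constant $C$, not an \emph{isometric} copy of $\ell_q$; the block sequence is not "isometrically $\ell_q$" and you should not rely on that later.)

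The second assertion is where there is a genuine gap. Your main line of attack is to dualize "$(X^q)_*\in \textsf{T}_p$" into a $q$-AUC$^*$ renorming of $X^q$ and then claim this is incompatible with the constant-$1$ upper $\ell_q$ block estimates of $X^q$. That incompatibility is false: $\ell_q$ itself is simultaneously $q$-AUS and $q$-AUC with optimal constants, so no contradiction can be extracted from the coexistence of the two moduli alone. Worse, the witness you invoke to "pin the constants" --- a block sequence isometrically (or even isomorphically) equivalent to the $\ell_q$-basis --- is precisely what the first part of the lemma just proved \emph{cannot} exist in $X^q$. The correct mechanism, which you gesture at only in your closing sentence ("this would make $X^q$ essentially $\ell_q$-like"), is constructive: from $X^q\in\textsf{T}_q$ and the assumed $(X^q)_*\in\textsf{T}_p$ one builds \emph{biorthogonal} normalized block sequences $(x_n)\subset X^q$ and $(x_n^*)\subset (X^q)_*$ with consecutive supports such that $(x_n)$ is dominated by the $\ell_q$-basis and $(x_n^*)$ by the $\ell_p$-basis. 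H\"older's inequality then yields the missing lower estimate, $\|a\|_q\lesssim \sup\{|\langle \sum b_ix_i^*,\sum a_ix_i\rangle| : \|b\|_p\le 1\}\le C\|\sum a_ix_i\|_{X^q}$, so $(x_n)$ is equivalent to the $\ell_q$-basis --- contradicting the first part. Without this biorthogonal construction and the H\"older step, your argument for the "moreover" clause does not close.
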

	
	\begin{proof} Assume that $\ell_q$ is isomorphic to a subspace of $X^q$. Then we can find a normalized sequence $(x_n)$ in $X^q$ which is equivalent to the canonical basis of $\ell_q$. Since this sequence is weakly null in $X^q$, we can as well assume that the $x_n$'s have finite consecutive disjoint supports with respect to the basis $(e_n)$. Denote now $y_n=x_n^q \in X$, which is normalized in $X$. Using the $1$-unconditionality of $(e_n)$ in $X$, it follows from our assumptions that there exists $C>0$ such that for any $a_1,\ldots,a_n \in \Rdb$:
		$$\Big\|\sum_{i=1}^n a_i y_i\Big\|_X=\Big\|\Big(\sum_{i=1}^n |a_i|^{1/q} x_i\Big)^q\Big\|_{X}=\Big\|\sum_{i=1}^n |a_i|^{1/q} x_i\Big\|_{X^q}^q \ge C\sum_{i=1}^n |a_i|.$$
		This implies that $(y_n)$ is equivalent, for the norm of $X$, to the canonical basis of $\ell_1$, which contradicts the fact that $X$ is reflexive. We have proved that $\ell_q$ is not isomorphic to any subspace of $X^q$.
		
		Assume now that $(X^q)_* \in \textsf{T}_p$. We already explained that $X^q \in \textsf{T}_q$. Then we can inductively construct two normalized sequences $(x_n)$ in $X^q$ and $(x_n^*)$ in $(X^q)_*$ with consecutive supports with respect to the respective bases of $X^q$ and $(X^q)_*$, that are biorthogonal to each other and such that $(x_n)$ is dominated by the canonical basis of $\ell_q$ and $(x_n^*)$ is dominated by the canonical basis of $\ell_p$ (also use the fact the bases are $1$-unconditional). It then readily follows from H\"{o}lder's inequality that $(x_n)$ is equivalent to the canonical basis of $\ell_q$, which is a contradiction.  
	\end{proof}

	We are now ready to prove our result.
	
	\begin{proof}[Proof of Theorem \ref{complete}] Fix $p\in (1,\infty)$ and denote $q$ its conjugate exponent. 
		
		Let us denote $\cal A_3$ the set of all subsets of $\Ndb$ of cardinality at most $3$. For $\cal M \in \cal K(2^\Ndb)$, we denote $X_{\cal M}=T_{\cal M \cup \cal A_3}$. For any $\cal M \in \cal K(2^\Ndb)$, the basis $(u_n)$ of $X_{\cal M}$ is $1$-unconditional and shrinking. Note also that $\cal M \cup \cal A_3$ contains an infinite element if and only if $\cal M$ does. So $X_{\cal M}$ is reflexive if and only if $\cal M$ contains no infinite element and otherwise $X_{\cal M}$ is isomorphic to a $c_0$-sum of finite dimensional spaces. The space $Y_{\cal M}:=X_{\cal M}^*$ has a $1$-unconditional boundedly complete basis $(e_n)$ and so does $Y_{\cal M}^q$ (the $q$-convexification of $Y_{\cal M}$). We then set $Z_{\cal M}=(Y_{\cal M}^q)_*$. By abuse of notation, for any $\cal M \in \cal K(2^\Ndb)$, we shall denote $(e_n)$ the canonical basis of $Y_{\cal M}$ or $Y_{\cal M}^q$ and $(u_n)$ the canonical basis of $X_{\cal M}$ or $Z_{\cal M}$.
		
		If $\cal M\cap \cal I \neq \emptyset$, then  we apply property (4) of Proposition \ref{Kurka} and Lemma \ref{L1} to deduce that $Y_{\cal M}^q$ is isomorphic to an $\ell_q$-sum of finite dimensional spaces. Then $Z_{\cal M}$ is isomorphic to an $\ell_p$-sum of finite dimensional spaces and therefore is in $\textsf{T}_p$. On the other hand, if $\cal M\cap \cal I=\emptyset$, then $X_{\cal M}$ is reflexive and, by Lemma \ref{L3},  $Z_{\cal M} \notin \textsf{T}_p$.
		Our next goal is to show the existence of a Borel map $\Theta: \cal K(2^\Ndb)\to \textsf{SB}$ such that for all $\cal M \in \cal K(2^\Ndb)$, $\Theta(\cal M)$ is isometric to $Z_{\cal M}$. For this, we only have to slightly modify Kurka's argument. We shall first use the following.
		\begin{lemma}[Fact 3.4 in \cite{Kurka2018}]
			Let $\cal M_1,\cal M_2 \in \cal K(2^\Ndb)$ and $l\in \Ndb$ so that 
			$$\big\{A\cap \{1,\ldots,l\},\ A\in \cal M_1\big\}= \big\{A\cap \{1,\ldots,l\},\ A\in \cal M_2\big\}.$$
			Then $\|x\|_{T_{\cal M_1}}=\|x\|_{T_{\cal M_2}}$, for all $x\in \spa\{u_1,\ldots,u_l\}$.
		\end{lemma}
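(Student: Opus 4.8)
The plan is to unwind the definition of the Tsirelson-type norm $\|\cdot\|_{T_{\cal M}}$ as the nondecreasing limit of finitely-iterated auxiliary norms, and to push the hypothesis through an induction on the number of iterations. Recall (see \cite{ArgyrosDeliyanni1997,Kurka2018}) that on $c_{00}$ the norm of $T_{\cal M}$ satisfies $\|x\|_{T_{\cal M}}=\sup_{n\ge 0}|x|^n_{\cal M}$, where $|x|^0_{\cal M}=\max_i|x_i|$ and, for $n\ge 0$,
$$|x|^{n+1}_{\cal M}=\max\Big\{|x|^n_{\cal M},\ \tfrac12\sup\Big\{\textstyle\sum_{j=1}^d |E_jx|^n_{\cal M}:\ d\in\Ndb,\ (E_j)_{j=1}^d\text{ is }\cal M\text{-admissible}\Big\}\Big\},$$
the supremum ranging over all finite sequences $E_1<E_2<\ldots<E_d$ of nonempty finite subsets of $\Ndb$ that are $\cal M$-admissible. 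The structural feature of the notion of admissibility that we shall exploit is that whether $(E_j)_{j=1}^d$ is $\cal M$-admissible is decided by a condition referring to $\cal M$ only through those elements of members of $\cal M$ that do not exceed $\max E_d$.

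From this I extract the combinatorial core of the statement: \emph{if $E_1<\ldots<E_d$ are nonempty subsets of $\{1,\ldots,l\}$, then $(E_j)_{j=1}^d$ is $\cal M_1$-admissible if and only if it is $\cal M_2$-admissible.} Indeed, by the feature just mentioned the admissibility of such a family depends on $\cal M_i$ only through the trace family $\{A\cap\{1,\ldots,l\}:A\in\cal M_i\}$, and these two trace families coincide by hypothesis. I shall also use the routine fact — immediate from the definition of admissibility — that $\cal M$-admissibility is preserved if one replaces each $E_j$ by a subset and discards those that have become empty (re-indexing accordingly); in particular, intersecting all the $E_j$ of an admissible family with an initial interval $\{1,\ldots,l\}$ again produces an admissible family.

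Next I prove by induction on $n$ that $|x|^n_{\cal M_1}=|x|^n_{\cal M_2}$ for every $x\in\spa\{u_1,\ldots,u_l\}$. For $n=0$ both quantities equal $\max_i|x_i|$. Assume the equality at level $n$ and fix $x\in\spa\{u_1,\ldots,u_l\}$. When evaluating the supremum in the definition of $|x|^{n+1}_{\cal M_1}$ we may restrict to $\cal M_1$-admissible families $(E_j)_{j=1}^d$ with $E_j\subseteq\{1,\ldots,l\}$ for all $j$: since $\supp(x)\subseteq\{1,\ldots,l\}$, replacing each $E_j$ by $E_j\cap\{1,\ldots,l\}$ and discarding the empty ones changes none of the terms $|E_jx|^n_{\cal M_1}$ and keeps the family admissible. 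For such a family every $E_jx$ again lies in $\spa\{u_1,\ldots,u_l\}$, so $|E_jx|^n_{\cal M_1}=|E_jx|^n_{\cal M_2}$ by the induction hypothesis, while by the combinatorial core $(E_j)_{j=1}^d$ is $\cal M_1$-admissible exactly when it is $\cal M_2$-admissible. Hence the two suprema are equal, and together with $|x|^n_{\cal M_1}=|x|^n_{\cal M_2}$ this gives $|x|^{n+1}_{\cal M_1}=|x|^{n+1}_{\cal M_2}$.

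Taking the supremum over $n$ yields $\|x\|_{T_{\cal M_1}}=\|x\|_{T_{\cal M_2}}$ for all $x\in\spa\{u_1,\ldots,u_l\}$, which is the claim. The only point requiring genuine care is matching the above bookkeeping to the exact definition of ``$\cal M$-admissible family'' used in \cite{ArgyrosDeliyanni1997,Kurka2018}: once that definition is fixed, both the combinatorial core and the stability of admissibility under restriction to an initial interval are immediate, and the induction itself is purely formal.
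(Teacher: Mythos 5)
The paper does not actually prove this lemma: it is quoted as Fact 3.4 of \cite{Kurka2018} and used as a black box, so the only proof to compare yours with is Kurka's, which is indeed an induction over the stages of the construction of the norm. Your combinatorial core --- that $\cal M$-admissibility of a family $E_1<\dots<E_d$ of nonempty subsets of $\{1,\dots,l\}$ is decided by the trace family $\{A\cap\{1,\dots,l\}:A\in\cal M\}$, because any witness $\{m_1<\dots<m_d\}$ with $m_1\le E_1<m_2\le\dots\le E_d$ necessarily lies inside $\{1,\dots,l\}$ --- is exactly the right observation, and induction on the iteration level is the right engine.

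The step I would not sign off on is the opening identification $\|x\|_{T_{\cal M}}=\sup_n|x|^n_{\cal M}$ with the recursion $|x|^{n+1}_{\cal M}=\max\bigl\{|x|^n_{\cal M},\ \frac12\sup\sum_j|E_jx|^n_{\cal M}\bigr\}$. That recursion produces \emph{lower} $\ell_1$-estimates along admissible families, i.e.\ it defines the asymptotic-$\ell_1$ mixed Tsirelson space $\cal T s[\cal M,\frac12]$, whereas $T_{\cal M}$ is $\cal T s^*[\cal M,\frac12]$, the $c_0$-flavoured companion: this is forced by item (4) of Proposition \ref{Kurka}, since a space satisfying $\|x\|\ge\frac12\sum_j\|E_jx\|$ for admissible families with $\cal M$ containing an infinite set cannot be $2$-isomorphic to a $c_0$-sum of finite-dimensional blocks (take $d$ admissibly placed basis vectors lying in distinct blocks). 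So, as written, your induction proves the lemma for the wrong norm. The repair is short: either run the identical induction on the generations $K_n$ of the norming set (equivalently, of the generated unit ball) of $\cal T s^*[\cal M,\frac12]$, or keep your computation and add a duality step --- for $x$ supported in $\{1,\dots,l\}$ one has $\|x\|_{T_{\cal M}}=\sup\{|\langle x,y\rangle| : \|y\|_{\cal T s[\cal M,\frac12]}\le 1,\ \supp(y)\subseteq\{1,\dots,l\}\}$ by $1$-unconditionality, which is the same ``dual norm plus monotone basis'' move the paper itself performs when passing from $X_{\cal M}$ to $Y_{\cal M}$. One further point to check rather than wave at: your ``discard the $E_j$ with empty trace'' step uses that an initial subfamily of an admissible family is admissible, which requires the truncated witness $\{m_1,\dots,m_{j_0}\}$ (or some superset of it in $\{1,\dots,l\}$) to still be usable; this is automatic if $\cal M$ is hereditary or if admissibility is phrased as ``there exist $m_1<\dots<m_d$ belonging to some $A\in\cal M$'', and must be matched against the definition actually used in \cite{ArgyrosDeliyanni1997} and \cite{Kurka2018}.
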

		We claim that the spaces $Z_{\cal M}$ satisfy the same property. Indeed, assume as above that 
		$$\big\{A\cap \{1,\ldots,l\},\ A\in \cal M_1\big\}= \big\{A\cap \{1,\ldots,l\},\ A\in \cal M_2\big\}.$$ Then,  
		$$\big\{A\cap \{1,\ldots,l\},\ A\in \cal M_1 \cup \cal A_3\big\}= \big\{A\cap \{1,\ldots,l\},\ A\in \cal M_2 \cup \cal A_3\big\}.$$
		So, $\|x\|_{X_{\cal M_1}}=\|x\|_{X_{\cal M_2}}$, for all $x\in \spa\{u_1,\ldots,u_l\}$.
		Thus, for all $y\in \spa\{e_1,\ldots,e_l\}$, $\|y\|_{Y_{\cal M_1}}=\|y\|_{Y_{\cal M_2}}$. Here we just use the definition of a dual norm and the fact that the basis $(u_n)$ is monotone. Clearly we then have that $\|y\|_{Y_{\cal M_1}^q}=\|y\|_{Y_{\cal M_2}^q}$ for all $y\in \spa\{e_1,\ldots,e_l\}$, and using Hahn-Banach Theorem and the fact that the basis $(e_n)$ is also monotone we get that  $\|x\|_{Z_{\cal M_1}}=\|x\|_{Z_{\cal M_2}}$, for all $x\in \spa\{u_1,\ldots,u_l\}$. Now we can deduce from \cite{Kurka2018} (see argument in the proof of Lemma 3.8) that for every $x\in c_{00}$, the map $\cal M \mapsto \|\sum_{i=1}^\infty x_iu_i\|_{Z_{\cal M}}$ is continuous from $\cal K(2^\Ndb)$ to $\Rdb$. It finally follows from Lemma 2.4 in \cite{Kurka2018} that there exists a Borel map $\Theta: \cal K(2^\Ndb)\to \textsf{SB}$ such that for all $\cal M \in \cal K(2^\Ndb)$, $\Theta(\cal M)$ is isometric to $Z_{\cal M}$.
		
		By Hurewicz's Theorem, $C=\{\cal M\in \cal K(2^\Ndb), \cal M \cap \cal I\neq \emptyset\}$ is $\Sigma_1^1$-complete in $\cal K(2^\Ndb)$ and we have that $\Theta(\cal M) \in \textsf{T}_p$ if and only if $\cal M \in C$. This is known to imply that $\textsf{T}_p \cap \textsf{{Sep}}$ is $\Sigma_1^1$-hard (see Lemma 2.1 in \cite{Kurka2018}) and therefore $\Sigma_1^1$-complete, as we already know that it is $\Sigma_1^1$.
		\end{proof}
		
We conclude this paper with a remark that has been kindly suggested to us by the referee. Recall first that it follows easily from Kwapien's theorem that the isomorphism class of $\ell_2$ is Borel. In \cite{Godefroy2017}, G. Godefroy proved that, for $1<p<\infty$, the isomorphism class of $\ell_p$ is also Borel. On the other hand, O. Kurka showed in \cite{Kurka2019} that the isomorphism class of $c_0$ is not Borel. The following statement is due to O. Kurka \cite{Kurka2018} (Remarks 3.10 $(ii)$ and $(vii)$) for $p=1$ and the same proof combined with our construction can be applied to obtain the following.

\begin{proposition}\label{referee} 
Let $p\in (1,\infty)$ and $\textsf{\emph{J}}=(J_n)_{n=1}^\infty$ be a Banach-Mazur dense sequence of finite dimensional spaces. Then, the isomorphism class of $\big(\sum_{n=1}^\infty J_n)_{\ell_p}$ is not Borel.
\end{proposition}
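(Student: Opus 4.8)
The plan is to construct a Borel reduction of the $\Sigma_1^1$-complete Hurewicz set $C := \{\cal M \in \cal K(2^\Ndb) : \cal M \cap \cal I \ne \emptyset\}$ of Theorem \ref{Hurewicz} to the isomorphism class of $U := \big(\sum_{n=1}^\infty J_n\big)_{\ell_p}$: since the preimage of a Borel set under a Borel map is Borel and $C$ is not Borel, this forces the isomorphism class of $U$ not to be Borel (and, being analytic, to be $\Sigma_1^1$-complete). Note first that $U \in \textsf{T}_p$, having an FDD with upper $\ell_p$ block estimates. The reduction will be a correction of the Borel map $\Theta$ built in the proof of Theorem \ref{complete}: there the spaces $Z_{\cal M} = (Y_{\cal M}^q)_*$ satisfy $Z_{\cal M} \notin \textsf{T}_p$ for $\cal M \notin C$, while for $\cal M \in C$ the space $Z_{\cal M}$ is isomorphic to an $\ell_p$-sum of finite dimensional spaces --- but not necessarily to $U$ itself. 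I would therefore let $\Phi(\cal M)$ be an isometric copy in $\textsf{SB}$ of $Z_{\cal M} \oplus_p U$. Its Borel dependence on $\cal M$ follows from the same mechanism that produced $\Theta$: split $c_{00}$ into two infinite blocks of coordinates, endow the first with the norm of $Z_{\cal M}$ and the second with the (fixed) norm of $U$, and combine them in the $\ell_p$-sense; for each finitely supported vector the resulting norm is a continuous function of $\cal M$, since the $U$-part is constant and the $Z_{\cal M}$-part is continuous in $\cal M$ by the proof of Theorem \ref{complete}. Lemma 2.4 of \cite{Kurka2018} then delivers a Borel $\Phi : \cal K(2^\Ndb) \to \textsf{SB}$ with $\Phi(\cal M)$ isometric to $Z_{\cal M} \oplus_p U$.

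I would then verify that $\Phi(\cal M) \simeq U$ if and only if $\cal M \in C$. If $\cal M \notin C$ then $Z_{\cal M} \notin \textsf{T}_p$, so $\Phi(\cal M)$, which contains $Z_{\cal M}$ as a subspace, is not in $\textsf{T}_p$ (as $\textsf{T}_p$ passes to subspaces), whereas $U \in \textsf{T}_p$, hence $\Phi(\cal M) \not\simeq U$. If $\cal M \in C$, write $Z_{\cal M} \simeq \big(\sum_k F_k\big)_{\ell_p}$ with $F_k$ finite dimensional; then $\Phi(\cal M) \simeq \big(\sum_j G_j\big)_{\ell_p}$, where $(G_j)$ is obtained by interleaving $(F_k)$ with the finite dimensional decomposition of $U$ along $\textsf{J}$. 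In particular $(G_j)$ contains a Banach-Mazur dense subsequence, hence is itself Banach-Mazur dense. Everything thus comes down to the absorption statement: \emph{for any Banach-Mazur dense sequence $(G_j)$ of finite dimensional spaces, $\big(\sum_j G_j\big)_{\ell_p} \simeq U$.} I expect this to be the crux of the argument, although it is entirely of the kind already carried out in Section \ref{universal}.

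To establish it I would argue by Pe\l czynski's decomposition method. The first ingredient: for \emph{any} sequence $(E_j)$ of finite dimensional spaces, $\big(\sum_j E_j\big)_{\ell_p}$ is isomorphic to a complemented subspace of $U$; indeed, since a Banach-Mazur dense sequence realizes every finite dimensional type infinitely often, a greedy selection yields \emph{injective} indices $n(j)$ with Banach-Mazur distance from $E_j$ to $J_{n(j)}$ at most $2$, whence $\big(\sum_j E_j\big)_{\ell_p}$ is $2$-isomorphic to the sub-sum $\big(\sum_j J_{n(j)}\big)_{\ell_p}$, which is $1$-complemented in $U$. Taking $(E_j) = (G_j)$ shows that $\big(\sum_j G_j\big)_{\ell_p}$ is complemented in $U$, and applying the same argument with the roles reversed (using density of $(G_j)$) shows that $U$ is complemented in $\big(\sum_j G_j\big)_{\ell_p}$. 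The second ingredient: $U \simeq \ell_p(U)$; this follows since $\ell_p(U)$, being again an $\ell_p$-sum of finite dimensional spaces, is complemented in $U$ by the first ingredient, $U$ is trivially complemented in $\ell_p(U)$, and $\ell_p(\ell_p(U)) \simeq \ell_p(U)$ by reindexing $\Ndb \times \Ndb$, so Pe\l czynski's method applies. A final application of Pe\l czynski's method to the pair $\big(\sum_j G_j\big)_{\ell_p}$, $U$ --- each complemented in the other, with $U \simeq \ell_p(U)$ --- then gives $\big(\sum_j G_j\big)_{\ell_p} \simeq U$, exactly as in the last propositions of Section \ref{universal}. (In particular this also shows that $\big(\sum_n J_n\big)_{\ell_p}$ is, up to isomorphism, independent of the chosen dense sequence.) Putting everything together, $\Phi$ is Borel with $\Phi^{-1}(\{Z : Z \simeq U\}) = C$, so by the argument of Lemma 2.1 of \cite{Kurka2018} the isomorphism class of $U$ is $\Sigma_1^1$-hard, and in particular not Borel.
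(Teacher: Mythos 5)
Your proof is correct, and it differs from the paper's own argument in one substantive way. The paper keeps the reduction map $\Theta(\cal M)=Z_{\cal M}$ from the proof of Theorem \ref{complete} unchanged and asserts that $Z_{\cal M}$ is \emph{itself} isomorphic to $U=\big(\sum_{n=1}^\infty J_n\big)_{\ell_p}$ whenever $\cal M$ contains an infinite set --- a fact imported from Kurka's Remark 3.10, which requires knowing that the specific $\ell_p$-sum of finite dimensional spaces produced by the Tsirelson-type construction absorbs the universal sum. You instead replace $\Theta$ by $\cal M\mapsto Z_{\cal M}\oplus_p U$; padding by $U$ makes the summand sequence Banach--Mazur dense by fiat, so the isomorphism with $U$ for $\cal M\in C$ follows from the generic Johnson--Zippin/Pe\l czy\'nski absorption principle (two dense $\ell_p$-sums are each complemented in the other, and $U\simeq\ell_p(U)$), which is exactly the scheme already run in Section \ref{universal}. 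What you pay is the routine re-verification that the modified assignment is Borel (your block-splitting argument handles this); what you gain is that no structural information about the finite dimensional blocks of $Z_{\cal M}$ is needed, so the step the paper dismisses as ``follows easily'' becomes genuinely automatic. One micro-point: the greedy injective selection of indices $n(j)$ with $d_{BM}(E_j,J_{n(j)})\le 2$ can fail in dimension one, where the Banach--Mazur compactum is a singleton and a dense sequence need only visit it once; this is repaired by grouping consecutive summands so that all have dimension at least two, which does not change the $\ell_p$-sum. Everything else --- the $\textsf{T}_p$ obstruction for $\cal M\notin C$ via passage to subspaces, the continuity of the norms in $\cal M$, and the appeal to Hurewicz's theorem (Theorem \ref{Hurewicz}) together with Lemma 2.1 of Kurka --- matches the paper.
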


\begin{proof} Recall that for $\cal M \in \cal K(2^{\Ndb})$, $Z_{\cal M}$ is isomorphic to an $\ell_p$-sum of finite dimensional spaces if $\cal M$ contains an infinite subset of $\Ndb$ and not in $\textsf{T}_p$ otherwise. It follows easily that  $Z_{\cal M}$ is isomorphic to $\big(\sum_{n=1}^\infty J_n)_{\ell_p}$ if and only if $\cal M$ contains an infinite subset of $\Ndb$. The conclusion then follows from the previous arguments.
\end{proof}

\noindent{\bf Acknowledgements.} We are very grateful to the referee for her/his suggestions that helped improve the presentation of this paper and for indicating to us Proposition \ref{referee}.

	%------------------- Biblio -------------------
	\bibliographystyle{amsplain}
	
	\begin{bibsection}
		\begin{biblist}
			
			%\bibitem{AGM2016}

			\bib{ArgyrosDeliyanni1997}{article}{
				author = {Argyros, S. A.},
				author={Deliyanni, I.},
				title = {{Examples of asymptotic \(\ell_{1}\) Banach spaces}},
				journal = {{Trans. Am. Math. Soc.}},
				volume = {349},
				number = {3},
				pages = {973--995},
				date = {1997},
			}
			
			\bib{Bossard1994}{article}{
				author={Bossard, B.},
				title={Théorie descriptive des ensembles en géométrie des espaces de Banach},
				journal={Thèse de doctorat de Mathématiques de l'Université Paris VI},
				volume={},
				date={1994},
				number={},
				pages={}
			}
			
			\bib{Bossard2002}{article}{
				author={Bossard, B.},
				title={A coding of separable Banach space. Analytic and coanalytic families of Banach spaces},
				journal={Fundam. Math.},
				volume={172},
				date={2002},
				number={},
				pages={117--151}
			}

			%\bibitem{CauseyPositivity2018}
			
			\bib{CauseyPositivity2018}{article}{
				author={Causey, R. M.},
				title={Power type asymptotically uniformly smooth and asymptotically
					uniformly flat norms},
				journal={Positivity},
				volume={22},
				date={2018},
				number={5},
				pages={1197--1221}
			}
			
			%\bibitem{Causey3.5}

			\bib{CauseyNavoyan}{article}{
				author={Causey, R. M.},
				author={Navoyan, K. V.},
				title={Factorization of Asplund operators},
				journal={J. Math. Anal. Appl.},
				volume={479},
				date={2019},
				number={1},
				pages={1324--1354}
			}

			%\bibitem{DKLR2017}

			\bib{DKLR2017}{article}{
				author={Dilworth, S. J.},
				author={Kutzarova, D.},
				author={Lancien, G.},
				author={Randrianarivony, N. L.},
				title={Equivalent norms with the property $(\beta)$ of Rolewicz},
				journal={Rev. R. Acad. Cienc. Exactas F\'{\i}s. Nat. Ser. A Mat. RACSAM},
				volume={111},
				date={2017},
				number={1},
				pages={101--113}
			}

			\bib{FOSZ}{article}{
				author={Freeman, D.},
				author={Odell, E.},
				author={Schlumprecht, Th.},
				author={Zs\'ak, A.},
				title={Banach spaces of bounded Szlenk index. II},
				journal={Fundam. Math.},
				volume={205},
				date={2009},
				number={2},
				pages={162--177}
			}
			
			\bib{Godefroy2017}{article}{
				author={Godefroy, G.},
				title={The isomorphism classes of $l_p$ are Borel},
				journal={Houston J. Math.},
				volume={43},
				date={2017},
				number={3},
				pages={947--951}
			}

			\bib{GKL}{article}{
				author={Godefroy, G.},
				author={Kalton, N.},
				author={Lancien, G.},
				title={Subspaces of \(c_0 (\mathbb{N})\) and Lipschitz isomorphisms},
				journal={Geom. Funct. Anal.},
				volume={10},
				date={2000},
				number={4},
				pages={798--820}
			}
		
		\bib{GSR}{article}{
			author={Godefroy, G.},
			author={Saint-Raymond, J.},
			title={Descriptive complexity of some isomorphism classes of {Banach} spaces},
			journal={J. Funct. Anal.},
			volume={275},
			date={2018},
			number={4},
			pages={1008--1022}
		}

			\bib{JLPS}{article}{
				author={Johnson, W. B.},
				author={Lindenstrauss, J.},
				author={Preiss, D.},
				author={Schechtman, G.},
				title={Almost Fr\'echet differentiability of Lipschitz mappings between infinite-dimensional Banach spaces},
				journal={Proc. Lond. Math. Soc. (3)},
				volume={84},
				date={2002},
				number={3},
				pages={711--746}
			}
			
			\bib{JohnsonZippin}{article}{
				author={Johnson, W.B.},
				author={Zippin, M.},
				title={Subspaces and quotient spaces of \((\sum G_n)_{l_p}\) and \((\sum G_n)_{c_0}\)},
				journal={Isr. J. Math.},
				volume={17},
				date={1974},
				number={},
				pages={50--55}
			}

			\bib{KW}{article}{
				author={Kalton, N.},
				author={Werner, D.},
				title={Property (M), M-ideals and almost isometric structure},
				journal={J. Reine Angew. Math.},
				volume={461},
				date={1995},
				number={},
				pages={137--178}
			}

			\bib{Kechris}{book}{
				author = {Kechris, A.},
				title = {Classical descriptive set theory},
				%fjournal = {{Ergebnisse der Mathematik und ihrer Grenzgebiete}},
				journal = {Grad. Texts Math.},
				volume = {156},
				number = {},
				pages = {},
				date = {1995},
				Publisher = {Springer-Verlag, Berlin},
			}

			\bib{LindenstraussTzafriri}{book}{
				author = {Lindenstrauss, J.},
				author = {Tzafriri, L.},
				title = {Classical Banach spaces I. Sequence spaces},
				%fjournal = {{Ergebnisse der Mathematik und ihrer Grenzgebiete}},
				journal = {},
				volume = {92},
				number = {},
				pages = {},
				date = {1977},
				Publisher = {Springer-Verlag, Berlin},
			}
			
			\bib{Kurka2018}{article}{
				Author = {Kurka, O.},
				title = {Tsirelson-like spaces and complexity of classes of Banach spaces},
				journal = {Rev. R. Acad. Cienc. Exactas F\'{\i}s. Nat., Ser. A Mat., RACSAM},
				volume = {112},
				number = {4},
				pages = {1101--1123},
				date = {2018}
			}

\bib{Kurka2019}{article}{
				Author = {Kurka, O.},
				title = {The isomorphism class of $c_0$ is not Borel},
				journal = {Israel J. Math.},
				volume = {231},
				number = {1},
				pages = {243--268},
				date = {2019}
			}

			\bib{OS}{article}{
				Author = {Odell, E.},
				Author = {Schlumprecht, Th.},
				title = {Embeddings into Banach spaces with finite dimensional decompositions},
				journal = {Rev. R. Acad. Cienc. Exactas F\'{\i}s. Nat., Ser. A Mat., RACSAM},
				volume = {100},
				number = {},
				pages = {295--323},
				date = {2006}
			}

			\bib{Prus}{article}{
				Author = {Prus, S.},
				title = {Nearly uniformly smooth Banach Spaces},
				journal = {Boll. Unione Mat. Ital., VII. Ser., B},
				volume = {3},
				number = {3},
				pages = {507--521},
				date = {1989}
			}

			\bib{Schechtman1975}{article}{
				Author = {Schechtman, G.},
				Title = {{On Pelczynski's paper 'Universal bases'}},
				journal = {{Isr. J. Math.}},
				Volume = {22},
				Pages = {181--184},
				Year = {1975},
			}

		\end{biblist}
		
	\end{bibsection}
	
\end{document}